\newtheorem{theorem}      {Theorem}[section]
\newtheorem*{theorem*}    {Theorem}
\newtheorem{proposition}  [theorem]{Proposition}
\newtheorem{lemma}        [theorem]{Lemma}
\newtheorem{remark}       [theorem]{Remark}
\newcommand{\R}{\mathbb{R}}     
\newcommand{\Z}{\mathbb{Z}}
\newcommand{\1}{\mathbf{1}}     
\newcommand{\E}{\mathbb{E}}     
\newcommand{\Var}{\text{Var}}   
\newcommand{\F}{\mathcal{F}}    
\renewcommand{\l}{\lambda}
\renewcommand{\phi}{\varphi}
\renewcommand{\H}{\bar{H}}
\newcommand{\V}{\text{Var}}
\newcommand{\qi}{\mbox{\large$\chi$}}
\newcommand{\f}[1]{\overrightarrow{#1}}
\title{Berry-Esseen bounds for the $\chi_2$-distance in the Central Limit Theorem: a Markovian approach}
\author{ Claire Delplancke\thanks{Center of Mathematical Modeling, Santiago, Chile, \texttt{cdelplancke@cmm.uchile.cl}.} \and Laurent Miclo\thanks{Institut de Math\'ematiques de Toulouse, UMR 5219, Universit\'e de Toulouse and CNRS, France, \texttt{laurent.miclo@math.univ-toulouse.fr}.} }
\begin{document}

\maketitle
\begin{abstract} 
This article presents a new proof of the rate of convergence to the normal distribution of sums of independent, identically distributed random variables in chi-square distance, which was also recently studied in \cite{BobkovRenyi}. Our method consists of taking advantage of the underlying time non-homogeneous Markovian structure and studying the spectral properties of the non-reversible transition operator, which allows to find the optimal rate in the convergence above under matching moments assumptions. Our main assumption is that the random variables involved in the sum are independent and have polynomial density; interestingly, our approach allows to relax the identical distribution hypothesis.

\paragraph{Keywords:} Berry-Esseen bounds, Central Limit Theorem, non-homogeneous Markov chain, Hermite-Fourier decomposition, $\chi_2$-distance.

\paragraph{Mathematics Subject Classification (MSC2010):} 60F05, 60J05, 60J35, 47A75, 39B62.

\end{abstract}


\section{Introduction and main result}

The present article is devoted to the convergence in Central Limit Theorem with respect to $\chi_2$-distance, defined for two probability distributions $\theta,\,\mu$ as:
\begin{align*}
\qi_2(\theta,\mu)&:=\left(\int{\left(\frac{d\theta}{d\mu}-1\right)^2d\mu}\right)^\frac{1}{2}
\end{align*} 
if $\theta$ is absolutely continuous with respect to $\mu$ and $+\infty$ otherwise. In the following $\mu$ stands for the normal distribution. For a density function $f\in L^2(\mu)$ we use the shortened notation
\begin{align*}
\qi_2(f):=\qi_2(f\cdot\mu,\mu)
\end{align*}
to refer to the $\chi_2$-distance between the distribution with density $f$ with respect to $\mu$, denoted by $f\cdot \mu$, and $\mu$ itself. The $\chi_2$-distance bounds by above usual quantities like total variation distance and relative entropy:
\begin{align}
d_{TV}(f,\mu)&:=\int{|f-1|d\mu} \, \leq \,\qi_2(f),\quad \quad \text{Ent}(f||\mu):=\int{f\log f d\mu} \, \leq \, \qi_2^2(f),
\label{eq:DistComp}
\end{align}
the first relation being a consequence of Cauchy-Schwarz inequality and the second of the inequality $\log x \leq x-1$ for $x>0$.\smallskip 

Let $(X_i)_{i\geq 1}$ be real i.i.d.\@ random variables with density $\phi$ with respect to the normal distribution $\mu$, and consider the renormalized sum 
\begin{align}
Y_n&:=\frac{1}{\sqrt{n}}\sum_{i=1}^n{X_i},\quad \quad n\geq 1.
\label{eq:DefRenormalizedSum}
\end{align}
We call $f_n$ the density of $Y_n$ with respect to $\mu$.  The main result of this article is the following.
\begin{theorem}[Asymptotic bound on $\chi_2$]
If the moments of $X_1$ and the moments of $\mu$ match up to order $r$ for a given integer $r\geq 2$, and if the density $\phi$ of $X_1$ with respect to $\mu$ is polynomial and satisfies to Hypothesis \textbf{(H)} stated in Section \ref{sec:HermiteFourier}, then
\begin{align*}
\underset{n\rightarrow + \infty}{\lim \sup}\quad  n^{\frac{r-1}{2}}\,\qi_2(f_n) &< +\infty.  
\end{align*}
\label{thm:ConvergChi}
\end{theorem}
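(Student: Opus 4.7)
My plan is to exploit the Markov structure of $(Y_n)_{n\ge 1}$ and to study the associated transition operators on densities with respect to $\mu$ in the orthonormal Hermite basis of $L^2(\mu)$. Since $X_{n+1}$ is independent of $Y_n$ and $Y_{n+1} = a_n Y_n + b_n X_{n+1}$ with $a_n = \sqrt{n/(n+1)}$, $b_n = 1/\sqrt{n+1}$, $a_n^2+b_n^2 = 1$, the rotation $(y,z)\mapsto (a_n y + b_n z,\, -b_n y + a_n z)$, which preserves $\mu\otimes\mu$, shows that $f_{n+1}$ is obtained from $f_n$ by the linear operator
\begin{equation*}
M_n f(u) \;=\; \int f(a_n u - b_n v)\,\phi(b_n u + a_n v)\,\mu(dv),
\end{equation*}
so that $f_n = M_{n-1}M_{n-2}\cdots M_1\,\phi$. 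Writing $g\in L^2(\mu)$ in the orthonormal Hermite basis $(\tilde H_m)_{m\ge 0}$ as $g = \sum_m \hat g(m)\tilde H_m$, we have $\qi_2^2(f_n) = \sum_{m\ge 1}|\hat f_n(m)|^2$, and the moment-matching hypothesis is exactly $\hat\phi(m) = 0$ for $1\le m\le r$, while the polynomial nature of $\phi$ truncates its Hermite expansion to finitely many nonzero coefficients.

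The next step is to obtain the matrix of $M_n$ in the Hermite basis. Inserting the addition formula $\tilde H_m(ax+by) = \sum_{k+l=m}\sqrt{\binom{m}{k}}\,a^k b^l\,\tilde H_k(x)\tilde H_l(y)$, valid when $a^2+b^2=1$, in the integral defining $M_n$ and reducing products $\tilde H_k\tilde H_p$ back to Hermite polynomials via the classical product formula, we obtain an explicit non-symmetric matrix whose diagonal entries are $M_n(m,m) = a_n^m$ (the Ornstein-Uhlenbeck eigenvalues at time $t_n = -\log a_n \sim 1/(2n)$) and whose off-diagonal entries are polynomials in $a_n,b_n$ times coefficients $\hat\phi(j)$. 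The moment-matching condition forces every off-diagonal entry to carry a factor $b_n^{r+1}$ at least, so the perturbation $E_n:=M_n-D_n$ of the diagonal part $D_n$ has size $O(n^{-(r+1)/2})$ on each finite Hermite subspace, while $D_n$ is a contraction in $L^2(\mu)$.

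The conclusion will then follow from expanding
\begin{equation*}
f_n - 1 \;=\; \Bigl(\prod_{k=1}^{n-1}(D_k+E_k)\Bigr)\phi \;-\; 1
\end{equation*}
in powers of the $E_k$'s. The purely diagonal contribution $D_{n-1}\cdots D_1\phi$ preserves each Hermite level and multiplies the $m$-th coefficient by $\prod_{k=1}^{n-1}a_k^m = n^{-m/2}$; since $\hat\phi(m)$ vanishes for $m\le r$ and only finitely many higher coefficients survive, it contributes only $O(n^{-(r+1)})$ to $\qi_2^2(f_n)$. Each insertion of one $E_k$ costs a factor $n^{-(r+1)/2}$ but is summed over $n$ possible positions and redistributes mass across Hermite levels; a careful telescoping of the terms with increasing numbers of $E_k$ insertions should yield the dominant contribution of order $n^{-(r-1)}$ claimed by the theorem.

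The main obstacle lies precisely in this telescoping: the individual $E_k$ are small but there are $n$ of them and they couple different Hermite levels, so a naive triangle inequality would lose the rate. One must combine the smallness of each $E_k$ with the level-by-level contraction of $D_k$ and track how mass is redistributed across the Hermite subspaces. This is also where hypothesis (H), which restricts $\phi$ beyond being just a polynomial density, should enter, presumably in order to guarantee uniform-in-$n$ operator bounds between Hermite subspaces of differing degree.
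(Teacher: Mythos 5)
Your setup is the same as the paper's: the operator you call $M_n$ is exactly the adjoint transition operator $Q_{a,\phi}^*$ of Section 3, your Hermite matrix is formula \eqref{eq:CoeffQ} transposed, and your observation that the diagonal entries are $a_n^m$ while the off-diagonal entries carry $b_n^{r+1}$ and a coefficient $\phi_{n-m}$ is correct. The single-insertion heuristic even predicts the right rate, and it correctly identifies that the dominant contribution comes from the action on the constant function (the ``default of invariance'' in the paper's terminology). But the step you defer --- ``a careful telescoping of the terms with increasing numbers of $E_k$ insertions'' --- is the entire content of the theorem, and the route you propose for it has a concrete obstruction. The off-diagonal perturbation $E_k$ is \emph{not} small in operator norm on $L^2(\mu)$: its band mapping level $m$ to level $m+i$ has entries $\binom{m+i}{i}^{1/2}a_k^m b_k^i\phi_i$, and maximizing over $m$ (the maximum sits near $m\approx ik$) gives a quantity of order $|\phi_i|/\sqrt[4]{i}$, independent of $k$. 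So the bound $\|E_k\|=O(k^{-(r+1)/2})$ holds only on Hermite subspaces of bounded degree, while the expansion of $\prod_k(D_k+E_k)$ pushes mass to unboundedly high levels where the binomial growth eats the gain. A naive norm estimate on the Duhamel-type expansion therefore does not close, and you give no mechanism to control where the mass sits.

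The paper avoids the expansion altogether by proving a one-step scalar inequality (Theorem \ref{thm:RefinedConvolChi}): it splits $Q_{a,\phi}^*[f]-1=Q_{a,\phi}^*[f-1]+(Q_{a,\phi}^*[\1]-1)$, bounds the second term by an improved Poincar\'e inequality (Proposition \ref{prop:DefaultInv}), and bounds the first by the operator norm of $Q_{a,\phi}^*$ restricted to the subspace spanned by $(\H_k)_{k\geq r+1}$. That norm is estimated through the multiplicative reversibilization $M_{a,\phi}=Q_{a,\phi}Q_{a,\phi}^*$ and Gershgorin row sums of its Hermite matrix, which is uniform over all Hermite levels precisely because Hypothesis \textbf{(H)} forces the function $h(u)=e^{-u}(1+\sum_k\gamma_k(2u)^{k/2})$ to be non-increasing; this yields the prefactor $a^{r+1}(1+d_\phi(a))$ with $d_\phi(a)=O(|\log a|^{(r+1)/2})$. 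The resulting recursion $\qi_2(f_n)\leq c_n\,\qi_2(f_{n-1})+d_n$ with $c_n=1-\tfrac{r+1}{2n}+\mathcal{O}(n^{-3/2})$ and $d_n=n^{-(r+1)/2}\qi_2(\phi)$ is then elementary to unroll. To repair your argument you would need to supply an analogue of this uniform-in-level operator bound; without it the proposal is an accurate description of the problem rather than a proof.
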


The rate of convergence, which improves by a factor $\sqrt{n}$ for each supplementary moment of $X_1$ that agrees with the corresponding moment of $\mu$, is optimal: indeed by inequality \eqref{eq:DistComp}, it implies a rate of at least $n^{(r-1)/2}$ in total variation distance, which is proved to be optimal in \cite{BallyVT}.\smallskip

While this article was being written, the authors took notice of the recent article \cite{BobkovRenyi}, which provides the same rate of convergence, as well as the constant in front of it, in the i.i.d.\@ case, and under optimal assumptions.\smallskip

Our method of proof starts from the simple observation that the sequence of renormalized sums $(Y_n)_{n \geq 1}$ is a non-homogeneous Markov chain, as it satisfies to the recursion equation
\begin{align}
Y_{n+1}&=\sqrt{1-\frac{1}{n+1}} Y_n+ \frac{1}{\sqrt{n+1}}X_{n+1},\quad n\geq 1.
\label{eq:RecursionSum}
\end{align} 
Although the result which we obtain in Theorem \ref{thm:ConvergChi} is weaker than that of \cite{BobkovRenyi} in the i.i.d.\@ case, this method presents two advantages:
\begin{enumerate}[-]
 \item The result can be extended to independent, non necessarily identically distributed random variables $(X_i)_{i\geq 1}$, as this does not affect the Markovian character of $(Y_n)_{n \geq 1}$, provided that the  $(X_i)_{i\geq 1}$ are distributed according to a finite set of distributions, each polynomial and satisfying to \textbf{(H)} (cf Remark \ref{rmk:NonIIDCase}). The idea to use a Markovian framework to deal with non identically distributed random variables has also recently been used in \cite{BallyNonIdenticallyDistributed}.

 \item It is original and rather straightforward. The action of the Markov transition semigroup amounts to a barycentric convolution, since
$$ \left(\sqrt{1-\frac{1}{n+1}} \right)^2 + \left( \frac{1}{\sqrt{n+1}} \right)^2=1.$$  From the point of view of functional analysis, our approach amounts to look at the bilinear barycentric convolution operator
 \begin{align*}
 (f,\phi) \mapsto af*b\,\phi,
 \end{align*}
where $f,\,\phi$ are two density functions in $L^2(\mu)$, $a,\,b\in \R$ such that $a^2+b^2=1$, and $af*b\,\phi$ denotes the density of the random variable $aU+bV$ whith $U\sim f \cdot \mu,\, V \sim \phi \cdot \mu$, as the linear operator (denoted $\mathcal{Q}_{a,\phi}^*$ in the following):
\begin{align*}
L^2(\mu) &\rightarrow L^2(\mu)\\
 f &\mapsto af*b\,\phi.
\end{align*}
The function $\phi$ is fixed and represents the density of the innovations $(X_i)_{i\geq 1}$. Thanks to spectral analysis in the Hermite-Fourier domain, we derive an estimate of the operator norm, which is directly related to the rate of convergence in Theorem \ref{thm:ConvergChi} and to Theorem \ref{thm:RefinedConvolChi} below. 
\end{enumerate}

\textit{More on the proof.--} At the heart of the study is a formula describing the evolution of the $\chi_2$-distance under the action of the barycentric convolution. Set $(\H_n)_{n\in {\Z^+}}$ the set of renormalized Hermite polynomials, forming an orthonormal basis of $L^2(\mu)$:
\begin{align*}
\H_n(x)&:=\frac{(-1)^n}{\sqrt{n!}} e^{\frac{x^2}{2}} D^n\left(e^{-\frac{x^2}{2}}\right),\quad \quad x\in \R,\quad n\in {\Z^+},
\end{align*}
where $D$ denotes the derivation operator acting on smooth functions from $\R$ to $\R$. We show the following result:

\begin{theorem}[Barycentric convolution and $\chi_2$]
Let $r$ be a natural integer and $f,\phi$ be two densities in $L^2(\mu)$ whose moments match the moments of $\mu$ up to order $r$, and assume moreover that the density $\phi$ is polynomial. In particular, $\phi$ admits a decomposition on the Hermite basis of the form:
\begin{align}
\phi=1+\sum_{k=r+1}^N {\phi_k \H_k}.
\label{eq:DecompDensity}
\end{align}
Set:
\begin{align*}
a_\phi:=\left(1+\frac{N}{r+1}\right)^{-\frac{1}{4}}\in (0,1],
\end{align*}
and for all $a\in (0,1)$,
\begin{align*}
d_\phi(a):=\sum_{k=r+1}^N{\frac{ |\phi_k|}{\sqrt{k!}} \left(-2(r+1+N)\left(1+\frac{N}{r+1}\right)\log a\right)^{k/2} }.
\end{align*}

If $\phi$ satisfies to Hypothesis \textbf{(H)} stated in Section \ref{sec:HermiteFourier}, then for all $a\in(a_\phi,1)$, the following inequality stands:
\begin{align}
\qi_2\left (af*\sqrt{1-a^2}\phi\right) &\leq a^{r+1}\left(1+d_\phi(a)\right)\,\qi_2(f)\,+\,(1-a^2)^{\frac{r+1}{2}}\,\qi_2(\phi).
\tag{$E_r$}
\label{eq:ConvolChi}
\end{align}
Moreover,
\begin{align*}
\lim_{a\rightarrow 1}|\log a|^{-\frac{r+1}{2}}d_\phi(a) &< +\infty.
\end{align*}
\label{thm:RefinedConvolChi}
\end{theorem}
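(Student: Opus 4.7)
\medskip

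\textbf{Proof plan.} The plan is to expand $g:=af*b\phi$ (with $b:=\sqrt{1-a^2}$) in the Hermite basis and exploit the rigidity of its coefficients. Using either the classical addition formula $\H_n(ax+by)=\sum_{k=0}^n\sqrt{\binom{n}{k}}a^k b^{n-k}\H_k(x)\H_{n-k}(y)$ (valid when $a^2+b^2=1$), or equivalently the multiplicative identity $\hat g(t)=\hat f(ta)\hat\phi(tb)$ for the Hermite generating functions $\hat h(t):=\sum_k h_k t^k/\sqrt{k!}$, one obtains
\[
g_n \;=\; \sum_{k=0}^n \sqrt{\binom{n}{k}}\,a^k b^{n-k}\, f_k\, \phi_{n-k}.
\]
Combining $f_0=\phi_0=1$, the matching-moment conditions $f_k=\phi_k=0$ for $1\le k\le r$, and the polynomial decomposition \eqref{eq:DecompDensity} of $\phi$, this splits cleanly into
\[
g_n \;=\; a^n f_n \;+\; b^n \phi_n \;+\; S_n,\qquad S_n\,:=\,\sum_{j=r+1}^{\min(N,\,n-r-1)}\sqrt{\binom{n}{j}}\,a^{n-j}b^j \phi_j f_{n-j},
\]
with $S_n=0$ for $n<2r+2$.

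Minkowski's inequality in $\ell^2$ over $\{n\ge r+1\}$ yields
\[
\qi_2(g) \;\le\; \|(a^n f_n)_{n\ge r+1}\|_{\ell^2} + \|(b^n \phi_n)_{n\ge r+1}\|_{\ell^2} + \|(S_n)_{n\ge r+1}\|_{\ell^2}.
\]
Monotonicity $a^n\le a^{r+1}$ and $b^n\le b^{r+1}$ for $n\ge r+1$ handles the first two contributions at once, producing $a^{r+1}\qi_2(f)$ and $b^{r+1}\qi_2(\phi)$, which are precisely the extremal terms in \eqref{eq:ConvolChi}. The entire substance of the argument is therefore the bound $\|(S_n)_{n\ge r+1}\|_{\ell^2}\le a^{r+1}d_\phi(a)\,\qi_2(f)$.

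To attack this, I would view $S$ as a finite linear combination of weighted shift operators acting on the Hermite-coefficient sequence of $f$, namely $S=\sum_{j=r+1}^N b^j\phi_j M_j f$ with $(M_j f)_n:=\sqrt{\binom{n}{j}}\,a^{n-j}f_{n-j}\mathbf{1}_{n\ge j+r+1}$. Applying Minkowski in the $j$-index and performing the substitution $m=n-j$ reduces matters to the $j$-uniform pointwise estimate
\[
\binom{m+j}{j}\,a^{2(m-r-1)} \;\le\; \frac{(c|\log a|)^j}{j!\,(1-a^2)^j}\qquad(m\ge r+1,\ r+1\le j\le N),
\]
with $c=2(r+1+N)(1+N/(r+1))$. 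The condition $a>a_\phi$, equivalent to $a^4>(r+1)/(r+1+N)$, is precisely what controls the relevant supremum over $m$; Hypothesis \textbf{(H)} then furnishes the quantitative input that fixes the constant $c$. Summing the resulting bounds against the weights $b^j|\phi_j|/\sqrt{j!}$ reassembles the series defining $d_\phi(a)$.

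The asymptotic $\lim_{a\to 1}|\log a|^{-(r+1)/2}d_\phi(a)<+\infty$ is immediate from the definition of $d_\phi$: the smallest power of $|\log a|$ in the finite sum is $(r+1)/2$, attained at $k=r+1$ with finite coefficient $|\phi_{r+1}|\,c^{(r+1)/2}/\sqrt{(r+1)!}$. The main obstacle is the binomial estimate above: a brute-force computation of $\sup_m\binom{m+j}{j}a^{2(m-r-1)}$ yields a bound of order $|\log a|^{-j}$ as $a\to 1$, which is far too large on its own. The argument must therefore exploit the cancellation between this supremum and the factor $b^{2j}\sim(2|\log a|)^j$ carried by $\phi_j$ to produce the $j$-uniform vanishing needed for $d_\phi(a)\to 0$. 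Pinpointing how Hypothesis \textbf{(H)} and the threshold $a_\phi$ conspire to force this cancellation is where the real work lies.
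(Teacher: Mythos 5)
Your setup is sound and matches the paper's starting point: the Hermite coefficients of $g=af*\sqrt{1-a^2}\,\phi$ are indeed $g_n=\sum_{k}\binom{n}{k}^{1/2}a^kb^{n-k}f_k\phi_{n-k}$, the term $\|(b^n\phi_n)_n\|_{\ell^2}\le b^{r+1}\qi_2(\phi)$ is exactly the paper's ``default of invariance'' $\|Q^*_{a,\phi}[\1]-1\|_{L^2(\mu)}$ (Proposition \ref{prop:DefaultInv}), and the tail claim on $d_\phi$ is indeed immediate. But the step you correctly identify as carrying ``the entire substance of the argument'' --- the bound $\|(S_n)_n\|_{\ell^2}\le a^{r+1}d_\phi(a)\,\qi_2(f)$ --- is not merely hard to prove by your method: it is \emph{false}. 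Take $\phi=1+\phi_N\H_N$ (so $K=N$) and $f=1+\epsilon\H_m$ with $m$ even, $\epsilon$ small (a legitimate density matching Gaussian moments up to order $m-1$), and let $m\sim 1/|\log a|$ as $a\to 1$. Then $\|(S_n)\|_{\ell^2}/\qi_2(f)=\binom{m+N}{N}^{1/2}a^m b^N|\phi_N|\approx \frac{(2m|\log a|)^{N/2}}{\sqrt{N!}}e^{-m|\log a|}|\phi_N|$, which converges to the strictly positive constant $2^{N/2}e^{-1}|\phi_N|/\sqrt{N!}$, whereas $a^{r+1}d_\phi(a)\to 0$. This is exactly the phenomenon you flag at the end: the supremum over $m$ of $\binom{m+j}{j}a^{2m}$ is attained at $m\sim j/(2|\log a|)$, and after multiplying by $b^{2j}$ the $|\log a|$'s cancel to leave an $a$-independent constant, not the vanishing quantity $d_\phi(a)^2$. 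No refinement of the estimate on $\|(S_n)\|$ alone can close this, because the target inequality itself fails; Hypothesis \textbf{(H)} and the threshold $a_\phi$ cannot rescue a decomposition in which the cross terms have already been severed from the diagonal ones.

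The paper's proof works precisely because it does \emph{not} perform your three-way split. It splits only two ways, $\qi_2(g)\le\|Q^*_{a,\phi}[f-1]\|_{L^2(\mu)}+\|Q^*_{a,\phi}[\1]-1\|_{L^2(\mu)}$, and bounds the first term --- your $(a^nf_n)_n$ \emph{and} $(S_n)_n$ together --- by the operator norm of $Q^*_{a,\phi}$ restricted to the span of $(\H_k)_{k\ge r+1}$. That norm is estimated via Gershgorin row sums of the symmetric reversibilization $M_{a,\phi}=Q_{a,\phi}Q^*_{a,\phi}$ (Lemma \ref{lemma:Gershgorin}): the row-$l$ sum factors as $a^{2l}\bigl(1+\sum_{k=K}^N\gamma_k((N+l)(1-a^2))^{k/2}\bigr)^2$, in which the globally decaying factor $a^{2l}$ coming from the diagonal is available to beat the growth in $l$ of the off-diagonal binomials. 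Hypothesis \textbf{(H)} is used exactly here, to show that $u\mapsto e^{-u}(1+\sum_k\gamma_k(2u)^{k/2})$ is non-increasing (via Lemma \ref{lemma:Polynom}), so the supremum over rows is attained at $l=K$ and yields $a^{2(r+1)}(1+d_\phi(a))^2$. In your counterexample this coupling manifests as $\|Q^*_{a,\phi}[f-1]\|^2/\qi_2(f)^2=e^{-2}(1+2^N\phi_N^2/N!)\le 1$ under \textbf{(H2b)} --- consistent with the theorem --- even though the cross-term piece alone violates your intermediate bound. To repair your argument you would have to abandon the Minkowski separation of $(a^nf_n)_n$ from $(S_n)_n$ and estimate their sum jointly, which is in essence what the Gershgorin row-sum computation does.
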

 
Equation \eqref{eq:DecompDensity}, which involves Hermite coefficients $(\phi_k)_{r+1 \leq k \leq N}$, is explained in Section \ref{sec:HermiteFourier}. Hypothesis \textbf{(H)} gives conditions on these Hermite coefficients.

\smallskip

Theorem \ref{thm:RefinedConvolChi} does not preserve the symmetry of the equation $af*(1-a^2)^{1/2}\phi=(1-a^2)^{1/2}\phi*af$, in the first place because the assumption on $f$ is weaker than the assumption on $\phi$, and in the second place because the term $d_\phi(a)$ in the upper-bound is non-vanishing in general. However, in the regimen of interest $a \rightarrow 1$ (which corresponds to $(n/n+1)^{1/2} \rightarrow 1$), one has
$$a^{r+1}=1-\frac{r+1}{2}(1-a)+o(a^2),\quad \quad 1+d_\phi(a)=1+\mathcal{O}\left((1-a)^{\frac{r+1}{2}}\right),$$
hence in the Taylor expansion of the prefactor $a^{r+1}(1+d_\phi(a)) $ the contribution from $d_\phi(a)$ is negligible with respect to the contribution from $ a^{r+1}$ as soon as $r \geq 2$, i.e.\@ if the centering and normalizing condition $\E[X_1]=0,\, \E[X_1^2]=1$ is satisfied. Let us also mention that there exists an alternative inequality, holding without the polynomial assumption on $\phi$, stated in Section \ref{sec:ProofConvolChi}.

\smallskip

Bound \eqref{eq:ConvolChi} bears a similarity to Shannon-Stam inequality for (absolute) entropy (\cite{ShannonCommunication,StamInequalities}):
\begin{align}
\text{Ent}\left (a\,U+\sqrt{1-a^2}V\right) &\leq a \, \text{Ent}(U)+\sqrt{1-a^2}\, \text{Ent}(V),\quad \quad a\in [0,1],
\label{eq:ShannonStam}
\end{align}
although, by the observation above, the coefficients in front of $\text{Ent}$ are of different order with respect to the coefficients in front of $\chi$ for all natural integer $r$.

\smallskip

For $r=1$, bound \eqref{eq:ConvolChi} gives back Poincar\'e inequality for the Ornstein-Uhlenbeck semigroup $(P_t)_{t\geq 0}$, defined for $f\in L^2(\mu)$ as
\begin{align}
P_t[f](x)=\E\left[f\left(e^{-t}x+\sqrt{1-e^{-2t}}Z\right)\right],\quad Z \sim \mu;\quad \quad x\in \R,\quad t\geq 0.
\label{eq:OrnsteinUhlenbeck}
\end{align}
Indeed, in the case where $\phi$ is the density of the normal distribution $\mu$, that is to say $\phi=1$, we adopt the convention $r=+\infty$ and $N=0$ in equality \eqref{eq:DecompDensity}, and set $a_\phi=0$ and $d_\phi(a)=0$ for all $a \in [0,1]$. As we will see in Section \ref{sec:ProofConvolChi}, $(E_1)$ then corresponds (up to a positivity assumption which can actually be discarded in the proof) to Poincar\'e inequality for $(P_t)_{t\geq 0}$: for $f\in L^2(\mu)$,
\begin{align}
\V_\mu(P_t f) &\leq e^{-2t}\, \V_\mu(f),\quad \quad t\geq 0. \label{eq:PoincareOU} \\ \nonumber
\end{align}

\textit{Results of the literature.--} The first quantification result for the convergence of renormalized sums of i.i.d.\@ variables was obtained independently by Berry and Esseen through Kolmogorov distance (\cite{BerryAccuracy, EsseenDistribution}). Rate of convergence in total variation distance is first addressed in \cite{Shirazdinov} and the optimal rate under matching moment assumptions and regularity condition is proved in \cite{BallyVT} using Malliavin's calculus.\smallskip

The rate of convergence in Theorem \ref{thm:ConvergChi} relies crucially on the fact that the inequality \eqref{eq:ConvolChi} incorporates the matching moments assumption through exponent $r$ on the barycentric coefficients. By comparison, Shannon-Stam inequality \eqref{eq:ShannonStam} implies (jointly with a result of monotonicity of relative entropy and Fisher information under convolution) the convergence $\text{Ent}(f_n)\rightarrow 0$ without rate (\cite{BrownFisher,BarronEntropy}). The optimal rate is derived in \cite{ArtsteinEntropy} when the random variable $X_1$ satisfies to a Poincar\'e inequality. \cite{BobkovEntropyRate} provides an asymptotic expansion of entropy involving the moments of $X_1$. Similar developments occured for Fisher information (\cite{BrownFisher}, \cite{BobkovFisher}), and for R\'enyi distances (\cite{BobkovRenyi}), which include the $\chi_2$-distance. \smallskip

Other distances include Sobolev (\cite{GoudonToscani}) and Wasserstein (\cite{Ibragimov,Tanaka,RioBounds,RioConstants,RioStationarySequence}) distances. A typical assumption in Berry-Esseen theorems is the existence of moments up to a certain order for the random variable $X_1$. In the present framework, the fact that $\phi$ is in $L^2(\mu)$ implies that moments of all order exist. This is consistent with the fact that the $\chi_2$-distance bounds by above the usual quantities (as shown in \eqref{eq:DistComp}; a similar inequality holds for Wasserstein distance of order $1$). \smallskip

In another direction of research, Stein's method and Malliavin calculus revealed to be powerful tools to study, including in a quantitative way, the asymptotic normality of multidimensional random variables living in Gaussian chaoses. Let us cite, among an increasingly rich literature, the reference book \cite{NourdinSteinMalliavin}. It is interesting to remark the formal similarity between their objects and ours, although the results do not compare. Indeed, in the Stein-Malliavin framework, the typical random variable $X_1$ writes $X_1=\phi(Z)$, with $Z$ a Gaussian random variable living in $\R^n$ with law $\mu_n$ and $\phi\in L^2(\mu_n)$. To compare to our framework, let us take $n=1$ and $\phi$ a density in $L^2(\mu)$. The random variable $X_1=\phi(Z)$, where $Z\sim \mu$, bears no relation with the random variable $X_1$ which has \emph{density} $\phi$ with respect to $\mu$; hence, the two approaches are not reducible one to another. The authors would like to thank Anthony R\'eveillac for interesting discussions on this subject.\\

\textit{Structure of the article.--} The remaining of this paper is organized as follows. In Section \ref{sec:HermiteFourier}, Hermite-Fourier decomposition \eqref{eq:DecompDensity} is detailed, and Hypothesis \textbf{(H)} is stated and commented. Section \ref{sec:ConvolMarkov} is devoted to the explicit expression of the convolution operator and of its Hermite-Fourier decomposition. Theorem \ref{thm:RefinedConvolChi} is proved in Section \ref{sec:ProofConvolChi} by spectral analysis in the Hermite-Fourier domain. Finally Section \ref{sec:ProofRateChi} is devoted to the proof of Theorem \ref{thm:ConvergChi}.

\section{Hermite-Fourier decomposition of the density}
\label{sec:HermiteFourier}

First, let us introduce some notation. The symbol $\1$ stands for the function from $\R$ to $\R$ identically equal to $1$, $\Z^+$ for the set of natural integers and $\binom{n}{k}$ for the binomial coefficient associated to natural integers $k\leq n$.  For a fonction $f\in L^1(\mu)$, we denote indifferently
\begin{align*}
\mu(f)=\int{fd\mu}.
\end{align*}
The space $L^2(\mu)$ is a Hilbert space, with scalar product and associated norm defined as
\begin{align*}
\langle f,g \rangle_{L^2(\mu)} :=\int{fg\, d\mu},\quad \quad \|f\|_{L^2(\mu)}:=\sqrt{\mu(f^2)} ,\quad \quad f,g \in L^2(\mu).
\end{align*}
Set
\begin{align*}
\Var_\mu(f):=\int{(f-\mu(f))^2d\mu}.
\end{align*}

Hermite polynomials $(H_n)_{n\in {\Z^+}}$ are defined as: 
\begin{align*}
H_n(x)&:=(-1)^n e^{\frac{x^2}{2}} D^n\left(e^{-\frac{x^2}{2}}\right),\quad \quad x\in \R,\quad n\in {\Z^+},
\end{align*}
where we recall that $D$ stands for the derivation operator acting on smooth functions from $\R$ to $\R$. Hermite polynomials are also characterized by the following equation: for all smooth functions $f:\R\rightarrow\R$,
\begin{align}
\int{f H_n d\mu}=\int{D^n f d\mu}.
\label{eq:HermiteDerive}
\end{align}
They form an orthogonal basis of $L^2(\mu)$: for all $n,m \in {\Z^+}$,
\begin{align*}
\int{H_m H_n d\mu}= n! \,\delta_{n,m}.
\end{align*}
In the paper it is more convenient to work with renormalized Hermite polynomials $(\H_n)_{n\in {\Z^+}}$:
\begin{align*}
\H_n=\frac{1}{\sqrt{n!}}H_n,\quad \quad n \in {\Z^+},
\end{align*}
which form an orthonormal basis of $L^2(\mu)$. By convention set $\H_{-1}=0$. One has:
\begin{align}
D \H_n&= \sqrt{n} \H_{n-1},\quad \quad n \in {\Z^+}.
\label{eq:DeriveeHermite}
\end{align}
As $H_0=1$, for all natural integer $\H_n$ is of degree $n$. The basis $(\H_n)_{n \in {\Z^+}}$ is diagonal for the Ornstein-Uhlenbeck semigroup defined in \eqref{eq:OrnsteinUhlenbeck}:
\begin{align*}
P_t[\H_n]&=e^{-nt}\H_n,\quad \quad n \in {\Z^+},\quad t\geq 0.
\end{align*}

For all functions $g \in L^2(\gamma)$, call $(g_k)_{k\in {\Z^+}}$ its coefficients on the orthonormal Hermite basis,
\begin{align*}
g&=\sum_{k\in {\Z^+}}g_k \H_k,
\end{align*}
where the equality stands in $L^2(\mu)$, and denote indifferently $\F(g):=\f{g}=(g_k)_{k \in {\Z^+}}$ the sequence of its coefficients, which belongs to the Hilbert space $l^2$, defined as the set of real sequences $(u_k)_{k \in {\Z^+}}$ such that $\sum_{k\in {\Z^+}}u_k^2<+\infty$. The application
\begin{align}
L^2(\mu) \rightarrow l^2,\quad g \mapsto \F(g),
\label{eq:isometry}
\end{align}
is an isometry of Hilbert spaces.\smallskip

If $\phi \in L^2(\mu)$ is a density, then $\phi_0=1$. The matching moments assumption has a nice interpretation in terms of the coefficients: for all positive integer $r$,
\begin{align*}
\left( \forall k \in \left\{1,\dots r\right\},\quad \int{x^k\phi(x)d\mu(x)}=\int{x^kd\mu(x)}\right)\quad \Leftrightarrow\quad  \left( \forall k \in \left\{1,\dots r\right\},\quad \phi_k=0\right).
\end{align*}
Indeed, Hermite polynomial $\H_n$ being of degree $n$ for all natural integers, one has the equivalence
\begin{gather*}
\left( \forall k \in \left\{1,\dots r\right\},\quad \int{x^k\phi(x)d\mu(x)}=\int{x^kd\mu(x)}\right)\\
 \Leftrightarrow\quad  \left( \forall k \in \left\{1,\dots r\right\},\quad \int{\H_k(x)\phi(x)d\mu(x)}=\int{\H_k(x) d\mu(x)}\right),
\end{gather*}
and by orthogonality of $(\H_n)_{n\in {\Z^+}}$ it stands that for all $k \in {\Z^+}$,
\begin{align*}
\int{\H_k d\mu}&=\int{\H_k \H_0d\mu}=\delta_{0,k}.
\end{align*}
The assumption that $\phi$ is a polynomial density whose moments agree with moments of $\mu$ up to $r$ hence amounts to:
\begin{align*}
\exists N \in {\Z^+}, \quad N>r,\quad \phi=1+\sum_{k=r+1}^N{\phi_k \H_k},
\end{align*}
which corresponds to equality \eqref{eq:DecompDensity} above. From now on we set $K=r+1$. When $\phi$ is the density of $\mu$ itself, that is to say $\phi=\H_0=1$, we set $K=+\infty$ and $N=0$ by convention. \smallskip

Let us introduce the quantities
\begin{align*}
C_k&:=\left(1+\frac{N}{K}\right)^{k/2},\quad \quad \gamma_k=\frac{1}{\sqrt{k!}}C_k |\phi_k|,\quad\quad k \in {\Z^+}.
\end{align*}
We are now ready to state Hypothesis \textbf{(H)}, which is composed of two parts, \textbf{(H1)} and \textbf{(H2)} as follows. \smallskip

\textbf{(H1)} If $K\leq N-2$, for all $K\leq k\leq N-2$, 
$$(k+2)\gamma_{k+2}\leq \gamma_k.$$
For non-vanishing $\phi_k$ this relation is equivalent to 
$$\left|\frac{\phi_{k+2}}{\phi_k}\right|\leq \frac{1}{1+\frac{N}{K}}\left(1-\frac{1}{k+2}\right)^{1/2}$$
if $\phi_k\neq 0$ and is implied by the simplest assumption\smallskip

\textbf{(H1')} If $K\leq N-2$, for all $K\leq k\leq N$,
$$\left|{\phi_{k+1}}\right| \leq r \left|{\phi_k}\right|;\quad \quad  r:=\left(\left(1-\frac{1}{K+2}\right)^{1/2}\frac{1}{1+\frac{N}{K}}\right)^{1/2}.$$
Remark that assumption \textbf{(H1)} implies that $\gamma_k=0 \Rightarrow \gamma_{k+2}=0$. \smallskip

The second condition \textbf{(H2)} has two parts:\\
\textbf{(H2a)} If $K \leq N-1$, 
\begin{eqnarray*}
\left(\frac{2(K+1)\gamma_{K+1}}{N}\right)^N\left(\frac{K-1}{2\gamma_N}\right)^{K-1} \vee \, \left(\frac{2K\gamma_{K}}{N-1}\right)^{N-1}\left(\frac{K-2}{2\gamma_{N-1}}\right)^{K-2}
&\leq & \frac{1}{(N-K+1)^{N-K+1}}.
\end{eqnarray*}
\textbf{(H2b)} If $K=N$,
$$\gamma_N\leq \frac{1}{2(N-2)^{(N-2)/2}}.$$

\smallskip
Loosely speaking, assumption \textbf{(H1)}, which is present only if $K\leq N-2$, amounts to ask a geometric decrease for the coefficients $(\phi_k)$. If $K=N$ or $K=N-1$, assumption \textbf{(H2)} requires the leading coefficients $\phi_N$ and $\phi_{N-1}$ to be not too big, which is fair enough; it is more painful to write when $K<N-1$, but it can be interpreted as the requirement that the coefficients $(\phi_k)_{K \leq k \leq N}$ do not decay too fast.

\smallskip

We conclude this section by the following comment on the range of validity of Theorems \ref{thm:RefinedConvolChi} and \ref{thm:ConvergChi}.

\begin{remark}[On the polynomial assumption]
\label{rmq:PolynomialAssumption}

We conjecture that inequality \eqref{eq:ConvolChi} holds without Hypothesis \textbf{(H)}: indeed, the fact that $\phi$, as a density, is nonnegative already implies restrictions on the coefficients, which are in fact sufficient to prove \eqref{eq:ConvolChi} in the case of densities of the form $\phi=1+c\H_2$ and $\phi=1+c'\H_4$. \smallskip

Whether the polynomial assumption is necessary is less clear. For comparison, the hypothesis of Gaussian chaos of finite orders is needed in \cite{NourdinTV}.
\end{remark}

\section{Explicit expression of the convolution operator}
\label{sec:ConvolMarkov}
%

\subsection{Convolution as a Markovian transition}

Set $(X_i)_{i\geq 1}$ random variable of density $\phi \in L^2(\mu)$. Throughout the paper, the notation $f_n$ stands for the density of the renormalized sum 
\begin{align*}
Y_n&=\frac{1}{\sqrt{n}}\sum_{i=1}^n{X_i},\quad \quad n\geq 1,
\end{align*}
which satisfy to the recursion equation: 
\begin{align*}
Y_{n+1}&=\sqrt{1-\frac{1}{n+1}} Y_n+ \frac{1}{\sqrt{n+1}}X_{n+1},\quad n\geq 1.
\tag{\ref{eq:RecursionSum}}
\end{align*}
For a parameter $a\in [0,1]$, introduce the bilinear barycentric convolution operator $K_a$ defined as
\begin{align*}
\forall f,\phi \in L^2(\mu),\quad \quad K_a(f\,,\phi):=af*\sqrt{1-a^2}\,\phi.
\end{align*} 
Equation \eqref{eq:RecursionSum} in turn yields the corresponding recursion relation for the successive densities:
\begin{align}
f_{n+1}= K_{a_{n+1}}(f_n,\phi),\quad \quad a_{n+1}:=\sqrt{1-\frac{1}{n+1}},\quad n \geq 1.
\label{eq:RecursionDensity}
\end{align}
\smallskip

On the other hand, relation \eqref{eq:RecursionSum} translates into the the fact that $(Y_n)_{n\geq 1}$ is an inhomogeneous Markov chain. The associated semigroup $(\mathcal{Q}_{p,q})_{q\geq p \geq 1}$ is defined for continuous bounded functions as:
\begin{align*}
\mathcal{Q}_{p,q}[f](x)&:=\E[f(Y_q)|Y_p=x],\quad \quad x\in \R,\quad q\geq p \geq 1.
\end{align*}
By relation \eqref{eq:RecursionSum}, the explicit expression of $\mathcal{Q}_{n,n+1}$ is straightforward: for all $f\in L^2(\mu)$,
\begin{align*}
\mathcal{Q}_{n,n+1}[f]=Q_{a_{n+1},\phi}[f],
\end{align*}
where the operator $Q_{a,\phi}$ is defined for all $a \in [0,1]$ and density $\phi \in L^2(\mu)$ as:
\begin{align*}
Q_{a,\phi}[f](x)&:=\int{f(ax+\sqrt{1-a^2}y)\phi(y)dy},\quad \quad x \in \R.
\end{align*}
Denote $\mathcal{Q}_{n,n+1}^*$ (resp.  $  Q_{a,\phi}^*$) the adjoint of $\mathcal{Q}_{n,n+1}$ (resp. $Q_{a,\phi}$) in $L^2(\mu)$.
The discrete version of Kolmogorov backward relation reads:
\begin{align*}
f_{n+1}:=\mathcal{Q}_{n,n+1}^*[f_n],\quad \quad n\geq 1.
\end{align*}
Hence $\mathcal{Q}_{n,n+1}^*[f_n]=K_{a_{n+1}}(f_n,\phi)$. More generally, for $f,\phi$ densities in $L^2(\mu)$, the following identity holds:
\begin{align*}
K_a(f,\phi)&=Q_{a,\phi}^* [f],\quad \quad a\in[0,1].
\end{align*}
The strategy used in the paper relies on this interpretion of barycentric convolution as the action of a Markovian transition, as explained in Section \ref{sec:Strategy}. 

\begin{proposition}[Explicit expression of the operators]
For all $a\in [0,1]$, one has
\begin{align}
Q_{a,\phi}[f](x)&=\int{f(ax+\sqrt{1-a^2}y)\phi(y)d\mu(y)}, \label{eq:ExplicitQ}\\
Q_{a,\phi}^*[f](x)&=\int{f(ax-\sqrt{1-a^2}y)\phi(\sqrt{1-a^2}x+ay)d\mu(y)}=K_a(f,\phi)(x).
\label{eq:ExplicitQstar}
\end{align}
\label{prop:Explicit}
\end{proposition}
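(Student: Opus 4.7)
The first identity is essentially the definition rewritten in the correct notation: since $\phi$ is the density of $X_{n+1}$ with respect to $\mu$, the conditional expectation $\E[f(aY_n + bX_{n+1}) \mid Y_n = x]$ with $b=\sqrt{1-a^2}$ equals $\int f(ax+by)\phi(y)\,d\mu(y)$, so no real work is needed there. The core of the proposition is the formula for $Q_{a,\phi}^{*}$ together with its identification with $K_a(f,\phi)$. My plan is to derive both assertions from a single ingredient: the rotation-invariance of the two-dimensional Gaussian measure $\mu\otimes\mu$ under the orthogonal map
\[
R : (x,y) \longmapsto (u,v) := (ax + by,\, -bx + ay), \qquad R^{-1}(u,v) = (au-bv,\, bu+av),
\]
which is indeed orthogonal because $a^2+b^2=1$.

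\emph{Step 1 (formula for the adjoint).} For $f,g \in L^2(\mu)$ the definition of the adjoint combined with Fubini's theorem (legitimate by Cauchy--Schwarz and the fact that $f,g,\phi\in L^2(\mu)$) gives
\begin{align*}
\langle Q_{a,\phi}f,\, g\rangle_{L^2(\mu)}
&= \iint f(ax+by)\,\phi(y)\,g(x)\,d\mu(x)\,d\mu(y) \\
&= \iint f(u)\,\phi(bu+av)\,g(au-bv)\,d\mu(u)\,d\mu(v),
\end{align*}
where the second line uses the change of variables $(x,y) = R^{-1}(u,v)$ together with $d\mu(x)\,d\mu(y) = d\mu(u)\,d\mu(v)$. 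Grouping the inner integral over $v$ and reading off the coefficient of $f(u)$ yields the integral representation \eqref{eq:ExplicitQstar} for $Q_{a,\phi}^{*}[g]$, up to relabeling.

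\emph{Step 2 (identification with $K_a$).} Let $U \sim f\cdot\mu$ and $V \sim \phi\cdot\mu$ be independent, so that $(U,V)$ has joint density $(u,v)\mapsto f(u)\phi(v)$ with respect to $\mu\otimes\mu$. By definition, $K_a(f,\phi)$ is the density of $Z := aU + bV$ with respect to $\mu$. Completing $Z$ by the orthogonal partner $W := -bU + aV$, the very same rotation $R$ and the invariance $R_{*}(\mu\otimes\mu) = \mu\otimes\mu$ show that $(Z,W)$ has joint density
\[
(z,w) \;\longmapsto\; f(az - bw)\,\phi(bz + aw)
\]
with respect to $\mu\otimes\mu$. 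Marginalising in $w$ gives
\[
K_a(f,\phi)(z) \;=\; \int f(az - bw)\,\phi(bz + aw)\,d\mu(w),
\]
which is precisely the expression for $Q_{a,\phi}^{*}[f](z)$ obtained in Step~1. There is no genuine obstacle here; the only technical point worth verifying is the Fubini step and the fact that the pushforward of $\mu\otimes\mu$ by $R$ equals $\mu\otimes\mu$, both handled routinely by $L^2(\mu)$ membership and the orthogonality of $R$.
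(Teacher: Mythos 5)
Your proposal is correct and follows essentially the same route as the paper: the adjoint formula is obtained by the change of variables given by a rotation of $\R^2$ leaving the two-dimensional standard Gaussian invariant. Your Step 2 merely makes explicit the identification $Q_{a,\phi}^*[f]=K_a(f,\phi)$, which the paper asserts just before the proposition via the Markov-chain interpretation; this is a welcome but minor addition.
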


The formulas are to be understood in the following way: if $f$ is bounded and continuous, they stand for all $x\in \R$; if $f\in L^2(\mu)$, they stand in the almost everywhere sense. One sees that the operators $Q_{a,\phi}$ and $Q_{a,\phi}^*$ are actually defined for all $f,\phi \in L^2(\gamma)$ independently from them being densities; in what follows, $Q_{a,\phi}$ and $Q_{a,\phi}^*$ refers to this extended definition when required. Furthermore, $Q_{a,\phi}$ and $Q_{a,\phi}^*$ are bounded in $L^2(\mu)$ for all $\phi \in L^2(\mu)$: indeed by Cauchy-Schwarz inequality, for all $f\in L^2(\mu)$,
\begin{align*}
\int{\left(Q_{a,\phi}[f]\right)^2d\mu} \leq \|f\|^2_{L^2(\mu)} \|\phi\|^2_{L^2(\mu)},\quad \quad \int{\left(Q_{a,\phi}^*[f]\right)^2d\mu} \leq \|f\|^2_{L^2(\mu)} \|\phi\|^2_{L^2(\mu)}.
\end{align*}

\begin{proof}[Proof of Proposition \ref{prop:Explicit}]

Formula \eqref{eq:ExplicitQ} has already been given; let us prove formula \eqref{eq:ExplicitQstar}.
As $a \in [0,1]$, there exists $\theta \in \R$ such that $\cos \theta =a$ and $\sin \theta =\sqrt{1-a^2}$. Denote $R_\theta$ the rotation of $\R^2$ with parameter $\theta$ and $\Gamma$ the Gaussian distribution on $\R^2$ with identity as covariance matrix. Then, invariance of $\Gamma$ under the action of $R_\theta$ implies that for all $f,g \in L^2(\gamma)$,
\begin{align*}
\int{fQ_{a,\phi}[g]d\mu}&=\iint{f(x)\phi(y)g((\cos \theta) x+ (\sin \theta) y)d\mu(x)d\mu(y)}\\
&=\iint{f(\Re (X))\phi(\Im(X))g(\Re(R_\theta X))d\Gamma(X)}\\
&=\iint{f(\Re (R_{-\theta} X))\phi(\Im(R_{-\theta} X))g(\Re(X))d\Gamma(X)}=\int{gQ_{a,\phi}^*[f]d\mu}.
\end{align*}
\end{proof}

\smallskip

\begin{remark}[Link with Ornstein-Uhlenbeck semigroup]
Recall the definition of the Ornstein-Uhlenbeck semigroup $(P_t)_{t\geq 0}$ given in \eqref{eq:OrnsteinUhlenbeck}. Hence, for all $a\in [0,1]$ and $f,\phi \in L^2(\mu)$, one has the two useful equalities:
\begin{align*}
Q_{a,\1}^*[f]=K_a(f,\1)=P_{-\log a}[f],\quad \quad Q_{a,\phi}^*[\1]=K_a(\1,\phi)=P_{-\frac{1}{2}\log(1- a)^2}[\phi].
\end{align*}
\label{rmq:OrnsteinUhlenbeck}
\end{remark}

\smallskip


Let us introduce the multiplicative reversibilization of the Markov transition operator $Q_{a,\phi}$, defined as
\begin{align}
M_{a,\phi}:=Q_{a,\phi}Q_{a,\phi}^*.
\label{eq:RevDef}
\end{align}
The concept of reversibilization of a non-reversible Markov operator traces back to \cite{FillReverse} which deals with homogeneous, invariant Markov chains. The new operator $M_{a,\phi}$ is now symmetric in $L^2(\mu)$, though in the general case, it is not Markovian: as $Q_{a,\phi}$ is not invariant, then $Q_{a,\phi}^*[\1]\neq \1$ and the mass conservation property $M_{a,\phi}[\1]= \1$ does not hold. Nonetheless, we will see in Section \ref{sec:PoincareLike} that spectral analysis of $M_{a,\phi}$ gives quantitative information on the action of Markovian transition $Q_{a,\phi}$ and convolution operator $Q_{a,\phi}^*$.

\subsection{Hermite-Fourier decomposition of the convolution operator}

Let us now determine how the operators $Q_{a,\phi}$, $Q_{a,\phi}^*$ and $M_{a,\phi}$ act with respect to the Hermite-Fourier decomposition defined in \eqref{eq:isometry}. For a bounded operator $Q$ of $L^2(\mu)$, we call $\f{Q}$ the infinite matrix defined as:
\begin{align*}
\f{Q}:=(\f{Q}(m,n))_{n,m\in {\Z^+}};\quad \quad (\f{Q})(m,n):=\int{Q(\H_n)\H_m d\mu},\quad \quad n,m\in {\Z^+}.
\end{align*}
The matrix $\f{Q}$ is the unique bounded operator of $l^2$ such that
\begin{align*}
\F(Q[f])= \f{Q} \f{f},\quad \quad f\in L^2(\mu).
\end{align*}
Set $\|R\|_\text{op}$ the operator norm of a bounded operator $R$ on a Hilbert space $\mathcal{H}$, defined with evident notation as
\begin{align*}
\|R\|_\text{op}:= \sup_{h \in \mathcal{H} \setminus \left\{0\right\}}\frac{\|Rh\|_\mathcal{H}}{\|h\|_\mathcal{H}}.
\end{align*}
The following property reveals useful: for all bounded operator $Q$ on $L^2(\mu)$, it stands that:
\begin{align}
\|Q\|_\text{op}=\|\f{Q}\|_\text{op}.
\label{eq:EgaliteNormeOP}
\end{align}

In the following proposition, we give the matrices $\f{Q}_{a,\phi}$, $\f{Q^*}_{a,\phi}$ and $\f{M}_{a,\phi}$ associated to the operators $Q_{a,\phi}$, $Q_{a,\phi}^*$ and $M_{a,\phi}$.

\begin{proposition}[Matrix form of operators]
For all $\phi\in L^2(\mu)$ and $a\in [0,1]$, one has:
\begin{align}
\forall m,n \in {\Z^+},\quad \f{Q}_{a,\phi}(m,n)&=\begin{cases}\binom{n}{m}^{\frac{1}{2}} a^{m}\left(1-a^2\right)^{\frac{n-m}{2}}  \phi_{n-m} ,& m \leq n\\ 0,& m>n,\end{cases} \label{eq:CoeffQ}
\end{align}
Denoting ${}^T\!N$ the transpose matrix of $N$, it stands that:
\begin{align*}
\f{Q^*}_{a,\phi} = {}^T\f{Q}_{a,\phi}.
\end{align*}
Finally, the matrix $\f{M}_{a,\phi}$ is symmetric and
\begin{align}
\forall l,i\in {\Z^+},\, i \leq l,\quad\f{M}_{a,\phi}(l,l-i)&=a^{2l-i}\sum_{k\geq 0}{\binom{k+l}{k}^{1/2}\binom{k+l}{k+i}^{1/2}(1-a^2)^{\frac{2k+i}{2}}\phi_{k+i}\phi_k}.\label{eq:CoeffM}
\end{align}
\end{proposition}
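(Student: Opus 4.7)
The plan is to reduce all three claims to a single algebraic input, namely the addition formula for the renormalized Hermite polynomials. Starting from the generating function $G(t,u)=\sum_{n\geq 0}(t^n/\sqrt{n!})\H_n(u)=e^{tu-t^2/2}$, the observation that for $\alpha^2+\beta^2=1$ the Gaussian prefactor splits, $G(t,\alpha x+\beta y)=G(\alpha t,x)\,G(\beta t,y)$, yields after matching coefficients of $t^n$ the formula
$$\H_n(\alpha x+\beta y) \;=\; \sum_{k=0}^n \binom{n}{k}^{1/2}\alpha^{n-k}\beta^{k}\,\H_{n-k}(x)\,\H_k(y).$$
This is the only nontrivial identity I would invoke.

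For \eqref{eq:CoeffQ}, I apply the addition formula with $\alpha=a$ and $\beta=\sqrt{1-a^2}$ inside $Q_{a,\phi}[\H_n](x)=\int\H_n(ax+\sqrt{1-a^2}y)\phi(y)\,d\mu(y)$ and integrate out $y$. Orthonormality of $(\H_j)$ together with the decomposition $\phi=\sum_j\phi_j\H_j$ turns $\int\H_k(y)\phi(y)\,d\mu(y)$ into $\phi_k$, so
$$Q_{a,\phi}[\H_n] \;=\; \sum_{k=0}^n \binom{n}{k}^{1/2} a^{n-k}(1-a^2)^{k/2}\phi_k\,\H_{n-k}.$$
Taking the $L^2(\mu)$-inner product with $\H_m$ (equivalently, reindexing $k=n-m$) and using $\binom{n}{n-m}=\binom{n}{m}$ gives the claimed entry; the triangular vanishing for $m>n$ is automatic since the sum only runs up to $k=n$. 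The adjoint formula $\f{Q}^*_{a,\phi}={}^T\!\f{Q}_{a,\phi}$ is then immediate: in an orthonormal basis of a real Hilbert space, the matrix of the adjoint of a bounded operator is its transpose.

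For \eqref{eq:CoeffM}, I write $\f{M}_{a,\phi}=\f{Q}_{a,\phi}\cdot{}^T\!\f{Q}_{a,\phi}$ as a product of bounded operators on $\ell^2$; the boundedness of $Q_{a,\phi}$ and $Q_{a,\phi}^*$ on $L^2(\mu)$ was already recorded below Proposition~\ref{prop:Explicit}, which guarantees absolute convergence of every entry (each entry being an $\ell^2$ inner product of two rows of $\f{Q}_{a,\phi}$). The triangular structure of \eqref{eq:CoeffQ} forces the summation index $j$ in $\f{M}_{a,\phi}(l,l-i)=\sum_j\f{Q}_{a,\phi}(l,j)\f{Q}_{a,\phi}(l-i,j)$ to start at $j=l$; substituting $k=j-l$, collecting the $a$-powers as $a^{l}\cdot a^{l-i}=a^{2l-i}$, the $(1-a^2)$-powers as $(1-a^2)^{k/2}(1-a^2)^{(k+i)/2}=(1-a^2)^{(2k+i)/2}$, and rewriting $\binom{k+l}{l}=\binom{k+l}{k}$, $\binom{k+l}{l-i}=\binom{k+l}{k+i}$ produces the stated closed form. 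The symmetry of $\f{M}_{a,\phi}$ is built in, since $M_{a,\phi}=Q_{a,\phi}Q_{a,\phi}^*$ is self-adjoint on $L^2(\mu)$; it can also be checked directly on the formula via the substitution $k\mapsto k+i$, which swaps the roles of $l$ and $l-i$.

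The entire argument is routine once the Hermite addition formula is in hand; the only potential pitfall is careful bookkeeping through the two index shifts $k=n-m$ and $k=j-l$, and ensuring the binomial coefficients and powers of $a$, $\sqrt{1-a^2}$ are combined correctly. No analytic difficulty arises beyond the elementary Cauchy--Schwarz bound already used to establish boundedness of $Q_{a,\phi}$ on $L^2(\mu)$.
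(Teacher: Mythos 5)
Your proposal is correct, and the overall architecture matches the paper's: compute the action of $Q_{a,\phi}$ on the Hermite basis to get \eqref{eq:CoeffQ}, read off $\f{Q^*}_{a,\phi}$ as the transpose, and obtain \eqref{eq:CoeffM} by multiplying the (triangular) matrices. The difference lies in the key identity used to expand $\H_n(ax+\sqrt{1-a^2}\,y)$. The paper works atom by atom on $Q_{a,\H_m}[\H_n]$, invoking the integration-by-parts characterization \eqref{eq:HermiteDerive} to convert the pairing against $\H_m(y)$ into $m$ derivatives in $y$, and then the eigenrelation $P_{-\log a}[H_{n-m}]=a^{n-m}H_{n-m}$ to evaluate the remaining Gaussian integral; the matrix entries for general $\phi$ then follow by bilinearity. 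You instead prove the addition formula
\begin{align*}
\H_n(\alpha x+\beta y)=\sum_{k=0}^n\binom{n}{k}^{1/2}\alpha^{n-k}\beta^k\,\H_{n-k}(x)\H_k(y),\qquad \alpha^2+\beta^2=1,
\end{align*}
from the generating function $\sum_n t^n\H_n(u)/\sqrt{n!}=e^{tu-t^2/2}$ and then integrate against $\phi\,d\mu$ using orthonormality. Both routes are sound and the bookkeeping (the index shifts $k=n-m$ and $j=k+l$, the identities $\binom{k+l}{l}=\binom{k+l}{k}$ and $\binom{k+l}{l-i}=\binom{k+l}{k+i}$) is carried out correctly. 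Your version has the merit of making the barycentric constraint $\alpha^2+\beta^2=1$ visibly responsible for the clean Hermite--Fourier representation (cf.\ Remark \ref{rmq:HermiteConvol}), and you are in fact more explicit than the paper about why the infinite sum defining each entry of $\f{M}_{a,\phi}$ converges absolutely (rows of $\f{Q}_{a,\phi}$ are the coefficient sequences of $Q_{a,\phi}^*[\H_l]\in L^2(\mu)$, hence lie in $\ell^2$); the paper dismisses \eqref{eq:CoeffM} as a ``simple computation.'' The paper's approach is marginally more economical only in that \eqref{eq:HermiteDerive} and the Ornstein--Uhlenbeck diagonalization are already set up in Section \ref{sec:HermiteFourier}.
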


\begin{proof}
To begin with, one needs to compute the Hermite-Fourier decomposition of $Q_{a,\H_m}[\H_n]$, for $a\in [0,1]$ and $n,m \in {\Z^+}$. Applying the properties of Hermite polynomials recalled in Section \ref{sec:HermiteFourier} yields:
\begin{align*}
Q_{a,\H_m}[\H_n]&=\int{\H_n(ax+\sqrt{1-a^2}y)\H_m(y)d\mu(y)}=\frac{1}{\sqrt{n!}\sqrt{m!}}\int{D^m(H_n(ax+\sqrt{1-a^2}y))d\mu(y)}.
\end{align*}
By the degree property, the integral vanishes for $n<m$. For $n\geq m$,
\begin{align*}
Q_{a,\H_m}[\H_n]&=\left(1-a^2\right)^{\frac{m}{2}}\frac{n \cdots (n-m+1)}{\sqrt{n!}\sqrt{m!}}\int{H_{n-m}(ax+\sqrt{1-a^2}y))d\mu(y)}\\
&=\left(1-a^2\right)^{\frac{m}{2}}\frac{n \cdots (n-m+1)}{\sqrt{n!}\sqrt{m!}}P_{-\log a}[H_{n-m}]\\
&=a^{n-m}\left(1-a^2\right)^{\frac{m}{2}}\frac{n \cdots (n-m+1)}{\sqrt{n!}\sqrt{m!}}H_{n-m}\\
&=a^{n-m}\left(1-a^2\right)^{\frac{m}{2}} \binom{n}{m}^{\frac{1}{2}}\H_{n-m}.
\end{align*}
By bilinearity, write 
\begin{align*}
Q_{a,\phi}(\H_n)&=\sum_{m\in {\Z^+}}{\phi_m Q_{a,\H_m}[\H_n]}=\sum_{m=0}^n{\phi_m a^{n-m}\left(1-a^2\right)^{\frac{m}{2}} \binom{n}{m}^{\frac{1}{2}}\H_{n-m} }\\
&=\sum_{m=0}^n{\phi_{n-m} a^{m}\left(1-a^2\right)^{\frac{n-m}{2}} \binom{n}{m}^{\frac{1}{2}}\H_{m} } =\sum_{m=0}^n{\f{Q}_{a,\phi}(m,n)\H_{m} },
\end{align*}
which proves \eqref{eq:CoeffQ}. Furthermore, definition \eqref{eq:RevDef} implies that $\f{M}_{a,\phi}=\f{Q}_{a,\phi} {}^T\! \f{Q}_{a,\phi}$, and formula \eqref{eq:CoeffM} follows by simple computation.
\end{proof}

\begin{remark}[On Hermite decomposition of the convolution operator]\
\begin{itemize}
\item It is to be noted that convolution with barycentric coefficients only admits a nice decomposition; contrarily to what happens with Fourier transform associated to Lebesgue measure, the usual convolution has no explicit Hermite-Fourier representation.

\item Thanks to formula \eqref{eq:CoeffQ} above, one finds that for all $\phi \in L^2(\mu)$, $m\in {\Z^+}$ and $a\in [0,1]$,
\begin{align*}
Q_{a,\phi}^*(\H_m)&=a^m \sum_{n\geq 0}{\binom{m+n}{n}^{1/2}(1-a^2)^{\frac{n}{2}}\phi_n \H_{m+n}},
\end{align*}
which allows to better understand the behaviour of the barycentric convolution: each nonvanishing coefficient on $\H_m$ and $\H_n$ in the respective decompositions of $f$ and $\phi$ contribute to a coefficient on $\H_{m+n}$ in the decomposition of $K_a(f,\phi)$.\smallskip

\item If $\phi$ is polynomial, then $\f{M}_{a,\phi}$ is a band matrix.
\end{itemize}
\label{rmq:HermiteConvol}
\end{remark}

\smallskip

We already noticed that $Q_{a,\phi}$ and $Q_{a,\phi}^*$, and by composition $M_{a,\phi}=Q_{a,\phi}Q_{a,\phi}^*$, are bounded operators. In fact, they are Hilbert-Schmidt operators. By definition, a bounded operator $R$ on the Hilbert space $\mathcal{H}$ is Hilbert-Schmidt, if, $(e_n)_{n\in {\Z^+}}$ standing for an orthonormal basis of $\mathcal{H}$, one has:
\begin{align*}
\sum_{n\in {\Z^+}}\|R(e_n)\|_\mathcal{H}^2 < +\infty.
\end{align*}

\begin{proposition}[Hilbert-Schmidt operators]
For all $\phi\in L^2(\mu)$ and $a\in [0,1)$, the operators $Q_{a,\phi},Q_{a,\phi}^*,M_{a,\phi}$ are Hilbert-Schmidt, hence compact.
\label{prop:HilbertSchmidt}
\end{proposition}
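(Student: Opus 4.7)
The plan is to compute the Hilbert-Schmidt norm of $Q_{a,\phi}$ directly from the explicit matrix form given in \eqref{eq:CoeffQ}, and then deduce the statements for $Q_{a,\phi}^*$ and $M_{a,\phi}$ by standard Hilbert-Schmidt algebra. Recall that by the isometry \eqref{eq:isometry}, working in the orthonormal Hermite basis $(\H_n)_{n\in\Z^+}$ of $L^2(\mu)$ is equivalent to working in the canonical basis of $l^2$, so
$$ \sum_{n\in\Z^+} \|Q_{a,\phi}[\H_n]\|_{L^2(\mu)}^2 \;=\; \sum_{n,m\in\Z^+} |\f{Q}_{a,\phi}(m,n)|^2. $$

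First I would substitute \eqref{eq:CoeffQ} into this double sum and change variables via $k=n-m$. This gives
\begin{align*}
\sum_{n\in\Z^+} \|Q_{a,\phi}[\H_n]\|_{L^2(\mu)}^2
&= \sum_{m\geq 0} \sum_{k\geq 0} \binom{m+k}{m} a^{2m} (1-a^2)^{k} |\phi_k|^2 \\
&= \sum_{k\geq 0} |\phi_k|^2 (1-a^2)^k \sum_{m\geq 0} \binom{m+k}{m} (a^2)^m.
\end{align*}
The inner series is the negative binomial generating function $\sum_{m\geq 0}\binom{m+k}{m} x^m = (1-x)^{-(k+1)}$, valid for $|x|<1$. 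Applying it with $x=a^2\in[0,1)$ collapses the expression to
$$ \sum_{n\in\Z^+} \|Q_{a,\phi}[\H_n]\|_{L^2(\mu)}^2 \;=\; \frac{1}{1-a^2}\sum_{k\geq 0} |\phi_k|^2 \;=\; \frac{\|\phi\|_{L^2(\mu)}^2}{1-a^2}, $$
which is finite since $a<1$ and $\phi\in L^2(\mu)$. This shows that $Q_{a,\phi}$ is Hilbert-Schmidt, with Hilbert-Schmidt norm $\|\phi\|_{L^2(\mu)}/\sqrt{1-a^2}$.

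Next, since $\f{Q^*}_{a,\phi} = {}^T\f{Q}_{a,\phi}$, the Hilbert-Schmidt norm of $Q_{a,\phi}^*$ equals that of $Q_{a,\phi}$ (the sum of squared entries is invariant under transposition), hence $Q_{a,\phi}^*$ is also Hilbert-Schmidt. For $M_{a,\phi}=Q_{a,\phi}Q_{a,\phi}^*$, the product of a Hilbert-Schmidt operator with a bounded operator is Hilbert-Schmidt (with the Hilbert-Schmidt norm bounded by the product of the respective norms), so $M_{a,\phi}$ is Hilbert-Schmidt as well. Finally, every Hilbert-Schmidt operator on a separable Hilbert space is compact, which gives the last claim. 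There is no real obstacle here; the only point requiring care is the summability at the boundary $a\to 1^-$, which is exactly where the factor $(1-a^2)^{-1}$ blows up and explains the restriction $a\in[0,1)$.
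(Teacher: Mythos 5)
Your proof is correct and follows essentially the same route as the paper: both reduce the Hilbert--Schmidt norm to the sum of squared entries of $\f{Q}_{a,\phi}$ (the paper does this via the diagonal of $\f{M}_{a,\phi}$, which is the same double sum) and evaluate it with the negative binomial series to get $\|\phi\|_{L^2(\mu)}^2/(1-a^2)$. The deductions for $Q_{a,\phi}^*$ and $M_{a,\phi}$ by transposition and composition also match the paper's argument.
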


\begin{proof}
Consider first $Q_{a,\phi}^*$.
\begin{align*}
\sum_{n \in {\Z^+}}{\|Q_{a,\phi}^*(\H_n)\|_{L^2(\mu)}^2}&=\sum_{n \in {\Z^+}}\langle \H_n,M_{a,\phi}\H_n \rangle_{L^2(\mu)}=\sum_{n\in {\Z^+}}\f{M}_{a,\phi}(n,n)\\
&=\underset{n,k \in {\Z^+}}{\sum}{\binom{k+n}{k}  (1-a^2)^k a^{2n}\phi_k^2}.
\end{align*}
Now, by the equality 
\begin{align*}
\sum_{n \in {\Z^+}}\binom{k+n}{k}u^n &=\frac{1}{(1-u)^{k+1}},\quad \quad k\in \Z^+,\quad u \in [0,1),
\end{align*}
we find that
\begin{align*}
\sum_{n \in {\Z^+}}{\|Q_{a,\phi}^*(\H_n)\|_{L^2(\mu)}^2}&=\sum_{k \in {\Z^+}}{ \frac{(1-a^2)^k}{(1-a^2)^{k+1}} \phi_k^2}=\frac{1}{1-a^2}\|\phi\|_{L^2(\mu)}^2 < +\infty.
\end{align*}
This implies that $Q_{a,\phi}$ is Hilbert-Schmidt and in turn so is $M_{a,\phi}$ by composition.
\end{proof}

\section[Proof of the convolution theorem]{Proof of Theorem \ref{thm:RefinedConvolChi}}
\label{sec:ProofConvolChi}

\subsection{Strategy for non-homogeneous Markov chains}
\label{sec:Strategy}

Let us now explain the strategy to exploit the Markovian framework. In Remark \ref{rmq:OrnsteinUhlenbeck}, we noticed that if $\phi=\1$, \emph{i.e.} the $X_i$'s are normal, then $Q_{a,\1}^*=P_{-\log a}$. In this case, the renormalized sums $(Y_n)_{\geq 1}$ are also normal, which corresponds to the fact that the Ornstein-Uhlenbeck semigroup $(P_t)_{t\geq 0}$ is invariant, and in fact reversible, with respect to $\mu$. The semigroup $(P_t)_{t\geq 0}$ also enjoys a Poincar\'e inequality recalled in equation \eqref{eq:PoincareOU}. If $f$ is a density then $\Var_\mu(f)=\chi^2(f)$, hence Poincar\'e inequality for the Ornstein-Uhlenbeck semigroup reads:
\begin{align*}
\qi_2(P_t [f]) &\leq e^{-t}\, \qi_2(f),\quad \quad t\geq 0.
\end{align*}
Furthermore, $f_{n+1}=P_{-\log a_{n+1}} [f_n]$ by \eqref{eq:RecursionDensity}, hence:
\begin{align*}
\qi_2(f_{n+1}) &= \qi_2\left( P_{-\log a_{n+1}}[f_n]   \right) \,\leq\, a_n \,\qi_2(f_n),\quad \quad n \geq 1,
\end{align*}
and by straighforward calculation one gets the decrease of $\qi_2(f_n)$.
\smallskip

The idea underlying our method consists in mimicking the reasoning above for the true operator $Q_{a,\phi}^*$ acting on densities, which is neither reversible nor satisfies to the mass conservation property in the general case, as was explained above. For $a\in[0,1]$ and $f$ a density in $L^2(\mu)$, let us write by triangular inequality:
\begin{align*}
\qi_2(Q_{a,\phi}^*[f])& = \left\|  Q_{a,\phi}^*[f] - 1 \right\|_{L^2(\mu)}\\
&\leq \left\|  Q_{a,\phi}^*[f-1]  \right\|_{L^2(\mu)} +\left\|  Q_{a,\phi}^*[\1] - 1 \right\|_{L^2(\mu)}.
\end{align*}
The term
\begin{align*}
\left\|  Q_{a,\phi}^*[\1] - 1 \right\|_{L^2(\mu)},
\end{align*}
can be thought of as a measure of the divergence from invariance of the transition operator, an idea tracing back to \cite{MicloAlgorithm}. \smallskip

Second, the centered term $\left\|  Q_{a,\phi}^*[f-1]  \right\|_{L^2(\mu)}$ rewrites:
\begin{align*}
\left\|  Q_{a,\phi}^*[f-1]  \right\|_{L^2(\mu)}^2&=\int{\left(  Q_{a,\phi}^*[f-1] \right)^2d\mu}=\int{(f-1) Q_{a,\phi} Q_{a,\phi}^*[f-1] d\mu}\\
&=\int{(f-1) M_{a,\phi}[f-1] d\mu},
\end{align*}
making appear the multiplicative reversibilization $M_{a,\phi}$ of $Q_{a,\phi}^*$ introduced above.\smallskip

In terms of convolution, this amounts to consider separately $K_a(f-1,\phi)$ and $K_a(\1,\phi)$. \smallskip

Following this roadmap, Proposition \ref{prop:DefaultInv} below deals with the default of invariance $\|Q_{a,\phi}^*[\1]-1\|_{L^2(\mu)}$ and Proposition \ref{prop:PoincareReverse} with the centered quantity $\|Q_{a,\phi}^*[f-1]\|_{L^2(\mu)}$. Theorem \ref{thm:RefinedConvolChi} is then proved in Section \ref{sec:ProofTheoremConvolChi}. For the sake of completeness, we conclude the part by stating an alternative bound to \eqref{eq:ConvolChi} in Section \ref{sec:AlterBound}.

\subsection{Improved Poincar\'e inequality for Ornstein-Uhlenbeck}
\label{sec:BoundDefaults}

The following result is an improvement of the usual Poincar\'e inequality for the Ornstein-Uhlenbeck semigroup \eqref{eq:PoincareOU} when more information is avalaible on the function $f\in L^2(\mu)$ at play.

\begin{proposition}[Improved Poincar\'e]
Let $r\in {\Z^+}$ and $f$ be a function in $L^2(\mu)$ with Hermite decomposition of the form
\begin{align*}
f=f_0+\sum_{n \geq r+1}{f_k \H_k},
\end{align*} 
Then for all $t \geq 0$,
\begin{align*}
\Var_\mu(P_t f) &\leq e^{-2(r+1)t} \Var_\mu(f).
\end{align*}
In particular, if $\phi$ is the density of a variable agreeing with the Gaussian moments up to $r$, then for all $a\in [0,1]$,
\begin{align*}
\|Q_{a,\phi}^*[\1]-1\|_{L^2(\mu)} &\leq (1-a^2)^\frac{r+1}{2} \qi_2(\phi).
\end{align*}
\label{prop:DefaultInv}
\end{proposition}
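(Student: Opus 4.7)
The approach is purely spectral and splits along the two inequalities in the statement.

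For the first, I would use the diagonalization of $(P_t)_{t\geq 0}$ in the Hermite basis recorded in Section 2, namely $P_t \H_k = e^{-kt}\H_k$. Applied term-by-term (and extended to $L^2(\mu)$ by linearity and continuity) to the expansion $f = f_0 + \sum_{k \geq r+1} f_k \H_k$, this gives $P_t f = f_0 + \sum_{k \geq r+1} f_k e^{-kt} \H_k$. Since $\mu$ is $P_t$-invariant, $\mu(P_t f) = f_0$, and Parseval then yields
\begin{align*}
\Var_\mu(P_t f) \,=\, \sum_{k \geq r+1} e^{-2kt} f_k^2 \,\leq\, e^{-2(r+1)t} \sum_{k \geq r+1} f_k^2 \,=\, e^{-2(r+1)t}\Var_\mu(f),
\end{align*}
upon bounding $e^{-2kt} \leq e^{-2(r+1)t}$ uniformly for $k \geq r+1$ and $t \geq 0$.

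For the second inequality I would invoke the identification from Remark \ref{rmq:OrnsteinUhlenbeck}, namely $Q_{a,\phi}^*[\1] = P_{t(a)}[\phi]$ with $t(a) := -\tfrac{1}{2}\log(1-a^2)$, so that $e^{-2t(a)} = 1-a^2$. The matching-moments hypothesis on $\phi$ translates, via the equivalence discussed in Section 2, into $\phi_k = 0$ for $1 \leq k \leq r$; combined with $\phi_0 = 1$ (as $\phi$ is a density), this is exactly the Hermite-expansion assumption of the first part applied to $\phi$. Applying that first inequality at $t = t(a)$ then gives
\begin{align*}
\Var_\mu(Q_{a,\phi}^*[\1]) \,=\, \Var_\mu(P_{t(a)}\phi) \,\leq\, (1-a^2)^{r+1}\,\Var_\mu(\phi) \,=\, (1-a^2)^{r+1}\,\qi_2^2(\phi),
\end{align*}
where the last equality uses $\Var_\mu(\phi) = \qi_2^2(\phi)$ for a density $\phi$. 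Since $\mu(P_{t(a)}\phi) = \mu(\phi) = 1$ by invariance, the left-hand side equals $\|Q_{a,\phi}^*[\1] - 1\|_{L^2(\mu)}^2$, and taking square roots concludes.

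I do not anticipate a serious obstacle: the argument is essentially a one-line eigenvalue estimate once the spectral decomposition of $P_t$ is in hand. The only care required is to identify correctly the Ornstein-Uhlenbeck time parameter $t(a)$ attached to $Q_{a,\phi}^*[\1]$ and to recognize that matching moments up to order $r$ is precisely the vanishing of the first $r$ Hermite coefficients of $\phi$, both of which are already spelled out earlier in the paper.
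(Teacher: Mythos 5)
Your argument is correct and coincides with the paper's own proof: the first inequality is obtained by diagonalizing $P_t$ on the Hermite basis and bounding $e^{-2kt}\leq e^{-2(r+1)t}$ for $k\geq r+1$, and the second follows by applying it to $\phi$ at time $t(a)=-\tfrac{1}{2}\log(1-a^2)$ via the identification $Q_{a,\phi}^*[\1]=P_{t(a)}[\phi]$ from Remark \ref{rmq:OrnsteinUhlenbeck}. The only difference is that you spell out the second step, which the paper leaves as a one-line reference to that remark.
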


\begin{proof}
Thanks to the properties of Hermite polynomials,
\begin{align*}
P_t f&= \sum_{k=0}^\infty{f_k P_{t}[\H_k]}=h_0 +\sum_{k=r+1}^\infty{f_k e^{-kt}\H_k};\\
\Var_\mu(P_t f)&=\sum_{k=r+1}^\infty{f_k^2 e^{-2kt}} \leq e^{-2(r+1)t} \sum_{k=r+1}^\infty{f_k^2} = e^{-2(r+1)t}\Var_\mu(f)  .
\end{align*}
The second inequality of Proposition \ref{prop:DefaultInv} follows from the first one by Remark \ref{rmq:OrnsteinUhlenbeck}.
\end{proof}

\subsection{Poincar\'e-like inequality for the convolution operator}
\label{sec:PoincareLike}
For $a\in[0,1]$ and a centered $g\in L^2(\mu)$ (that is $\mu(g)=0$), let us consider the quantity $\|Q_{a,\phi}^*[g]\|_{L^2(\mu)}$. By analogy with the Poincar\'e inquality for the Ornstein-Uhlenbeck semigroup, which is reversible with respect to $\mu$, we call the following result a Poincar\'e-like inequality holding for the operator $Q_{a,\phi}^*$, which in general is non-reversible. 

\begin{proposition}[Poincar\'e-like inequality]
Assume that $\phi$ is a  polynomial density in $L^2(\mu)$ whose moments match the moments of $\mu$ up to order $r \in {\Z^+}$, and which satisfies to Hypothesis \textbf{(H)} stated in Section \ref{sec:HermiteFourier}. Set $a_\phi\in [0,1)$ and $d_\phi:(0,1) \rightarrow \R$ as in Theorem \ref{thm:RefinedConvolChi}. 
Then, for all function $g\in L^2(\mu)$ which writes as: 
\begin{align*}
g=\sum_{k=r+1}^\infty g_k \H_k,
\end{align*}
and for all $a\in (a_\phi,1)$, it stands that:
\begin{align*}
\int{(Q_{a,\phi}^*[g])^2 d\mu} & \leq a^{r+1}\left(1+d_\phi(a)\right)^2 \int{g^2 d\mu} .
\end{align*}
\label{prop:PoincareReverse}
\end{proposition}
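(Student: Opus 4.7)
The plan is to split $\phi = 1 + \psi$ with $\psi = \sum_{k=r+1}^N \phi_k \H_k$, a decomposition consistent with the matching-moments assumption on $\phi$. Since $\phi \mapsto Q_{a,\phi}^*$ is linear and $Q_{a,\1}^* = P_{-\log a}$ by Remark \ref{rmq:OrnsteinUhlenbeck}, we may write $Q_{a,\phi}^* = P_{-\log a} + Q_{a,\psi}^*$, so that the triangle inequality in $L^2(\mu)$ reduces the task to proving the two bounds
\[
\|P_{-\log a}[g]\|_{L^2(\mu)} \leq a^{r+1}\|g\|_{L^2(\mu)}, \qquad \|Q_{a,\psi}^*[g]\|_{L^2(\mu)} \leq a^{r+1}d_\phi(a)\,\|g\|_{L^2(\mu)},
\]
whose combination yields the announced inequality after squaring.

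The first bound is immediate from the Improved Poincaré inequality of Proposition \ref{prop:DefaultInv}, since $g$ has Hermite expansion starting at index $r+1$. For the perturbation bound, the explicit shift formula $Q_{a,\H_m}^*[\H_k] = a^k(1-a^2)^{m/2}\binom{k+m}{m}^{1/2}\H_{k+m}$ of Remark \ref{rmq:HermiteConvol} gives
\[
[Q_{a,\psi}^*g]_l = \sum_{m=r+1}^{\min(N,\,l-r-1)} \phi_m\,a^{l-m}(1-a^2)^{m/2}\binom{l}{m}^{1/2} g_{l-m}.
\]
I would bound $\|Q_{a,\psi}^*[g]\|^2=\sum_l [Q_{a,\psi}^*g]_l^2$ by a weighted Cauchy--Schwarz in the inner sum over $m$, with weights keyed to the quantities $\gamma_m = C_m|\phi_m|/\sqrt{m!}$ of Section \ref{sec:HermiteFourier}, then sum over $l$ and perform the change of index $k=l-m \geq r+1$ to extract the factor $a^{2(r+1)}$ from the support condition $g\in V_{r+1}$.

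The main obstacle is precisely this last estimate. A naive Minkowski bound $\|Q_{a,\psi}^*[g]\|\leq \sum_m |\phi_m|\|Q_{a,\H_m}^*[g]\|$ is too crude: one has $\|Q_{a,\H_m}^*[g]\|^2 = (1-a^2)^m \sum_{k\geq r+1} g_k^2 a^{2k}\binom{k+m}{m}$, and the supremum of $a^{2(k-r-1)}\binom{k+m}{m}$ is attained near $k\sim m/(2|\log a|)$ with value of order $1/(2|\log a|)^m$; multiplied by $(1-a^2)^m \sim (2|\log a|)^m$ this only yields an $O(1)$ bound, missing the decay in $|\log a|$ encoded in $d_\phi(a)$. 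Recovering the correct power $|\log a|^{m/2}$ requires exploiting both the structural decay of $(\phi_m)$ imposed by \textbf{(H1)}, which makes the Cauchy--Schwarz weights summable with the right dependence on $|\log a|$, and the quantitative control on the leading coefficients $\phi_N,\phi_{N-1}$ given by \textbf{(H2)}, needed to dominate the large combinatorial factors $\binom{l}{m}$. The threshold $a_\phi=(1+N/(r+1))^{-1/4}$ then appears as the radius of convergence of the auxiliary series arising in these bounds, explaining the restriction $a\in(a_\phi,1)$.
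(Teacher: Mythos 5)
There is a genuine gap, and it is structural rather than technical. Your plan rests on the splitting $Q_{a,\phi}^* = P_{-\log a} + Q_{a,\psi}^*$ followed by the triangle inequality, which requires the operator-norm bound $\|Q_{a,\psi}^*[g]\|_{L^2(\mu)} \leq a^{r+1} d_\phi(a)\, \|g\|_{L^2(\mu)}$ uniformly over $g \in V_{r+1}$. This intermediate bound is false, because $d_\phi(a) \to 0$ as $a \to 1$ while the operator norm of $Q_{a,\psi}^*$ restricted to $V_{r+1}$ stays bounded away from $0$. Indeed, take $g = \H_k$; since the images $Q_{a,\H_m}^*[\H_k] \propto \H_{k+m}$ are mutually orthogonal for distinct $m$, one has
\begin{align*}
\|Q_{a,\psi}^*[\H_k]\|_{L^2(\mu)}^2 \;=\; \sum_{m=r+1}^{N} \phi_m^2\, a^{2k}(1-a^2)^m \binom{k+m}{m},
\end{align*}
and choosing $k \approx m_0/(2|\log a|)$ for some $m_0$ with $\phi_{m_0}\neq 0$ makes the term $a^{2k}(1-a^2)^{m_0}\binom{k+m_0}{m_0} \approx m_0^{m_0}e^{-m_0}/m_0! \approx (2\pi m_0)^{-1/2}$, a positive constant independent of $a$. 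So no refinement of the Cauchy--Schwarz weights, and no use of \textbf{(H1)}--\textbf{(H2)}, can rescue the two-term splitting: the loss occurs already at the triangle inequality, because for such high-frequency $g$ the first term $\|P_{-\log a}[g]\| = a^k$ is tiny and the desired total bound $a^{r+1}(1+d_\phi(a))\|g\|$ is met only through this compensation, which the triangle inequality discards. (Your own closing paragraph correctly senses that the estimate is the obstacle, but attributes it to insufficient exploitation of the hypotheses rather than to the falsity of the target inequality; in any case the key estimate is only announced, not carried out.)

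The paper avoids this by never separating the identity part of $\phi$: it bounds $\|Q_{a,\phi}^*[g]\|_{L^2(\mu)}^2 = \langle g, M_{a,\phi} g\rangle_{L^2(\mu)}$ with $M_{a,\phi}=Q_{a,\phi}Q_{a,\phi}^*$, restricts the symmetric compact operator $M_{a,\phi}$ to $V_{r+1}$, and controls its spectral radius by Gershgorin's row-sum bound $\sup_{l\geq r+1}\Sigma_{a,\phi}(l)$. Each row sum carries the global prefactor $a^{2l}$, which decays in $l$ fast enough to beat the growth of the binomial factors; the technical Lemmas \ref{lemma:DoubleProd} and \ref{lemma:Polynom}, together with \textbf{(H1)} and \textbf{(H2)}, show the row sums are maximized at $l=r+1$, yielding exactly $a^{2(r+1)}(1+d_\phi(a))^2$. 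This is where the coupling between the ``identity'' and ``perturbation'' parts of $\phi$ is retained, and where the threshold $a_\phi$ actually arises (as the monotonicity threshold of the function $f(a)=a^{-i}+(1+N/K)^{i/2}a^i$ in Lemma \ref{lemma:DoubleProd}), not as a radius of convergence.
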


The proof is cut out in a number of steps. We begin by the following lemma:

\begin{lemma}[Gershgorin's theorem]
Let $\phi$ and $g$ be as in Proposition \ref{prop:PoincareReverse}, and set $K=r+1$. Then, for all $a\in (0,1)$,
\begin{align}
\int{(Q_{a,\phi}^*[g])^2 d\mu} & \leq \sup_{l \geq K}\,\Sigma_{a,\phi}(l)\,\int{g^2 d\mu},
\label{eq:AppliGershgorin}
\end{align}
where
\begin{align*}
\Sigma_{a,\phi}(l):={\sum_{j=K}^{+\infty} \left|\f{M}_{a,\phi}(l,j)\right|}, \quad \quad l \geq K.
\end{align*}
\label{lemma:Gershgorin}
\end{lemma}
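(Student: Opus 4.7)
The plan is to translate the left-hand side into a quadratic form on the Hermite-Fourier side, exploit the symmetry of $\f{M}_{a,\phi}$ together with the support restriction on $g$, and then invoke a Gershgorin / Schur row-sum bound.

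First I would rewrite the integral through the reversibilization $M_{a,\phi}=Q_{a,\phi}Q_{a,\phi}^*$, along the lines of the computation at the end of Section \ref{sec:Strategy}:
\[
\int (Q_{a,\phi}^*[g])^2\,d\mu \;=\; \langle g,\,M_{a,\phi}\,g\rangle_{L^2(\mu)}.
\]
Passing to Hermite-Fourier coefficients via the isometry \eqref{eq:isometry} and using the matrix $\f{M}_{a,\phi}$, this becomes
\[
\sum_{l,\,j \in \Z^+} g_l\,\f{M}_{a,\phi}(l,j)\,g_j.
\]
Since $g$ has no components of order $<K$, both indices collapse to $l,j\geq K$, so the sum is in fact over the symmetric submatrix $\bigl(\f{M}_{a,\phi}(l,j)\bigr)_{l,j\geq K}$.

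Second, I would apply the classical Gershgorin / Schur row-sum test to this symmetric submatrix. Splitting $|\f{M}_{a,\phi}(l,j)|$ as $|\f{M}_{a,\phi}(l,j)|^{1/2}\cdot|\f{M}_{a,\phi}(l,j)|^{1/2}$, pairing one factor with $|g_l|$ and the other with $|g_j|$, and using the Cauchy-Schwarz inequality together with the symmetry relation $\f{M}_{a,\phi}(l,j)=\f{M}_{a,\phi}(j,l)$, one obtains
\[
\Biggl|\sum_{l,j\geq K} g_l\,\f{M}_{a,\phi}(l,j)\,g_j\Biggr| \;\leq\; \sum_{l\geq K} g_l^{\,2}\,\Sigma_{a,\phi}(l) \;\leq\; \sup_{l\geq K}\Sigma_{a,\phi}(l)\,\cdot\,\|g\|_{L^2(\mu)}^{2},
\]
which is exactly inequality \eqref{eq:AppliGershgorin}. (Equivalently, the same estimate follows from Gershgorin's disk theorem applied to finite truncations $P_n M_{a,\phi} P_n$, whose eigenvalues lie in $[-\sup_{l\geq K}\Sigma_{a,\phi}(l),\sup_{l\geq K}\Sigma_{a,\phi}(l)]$, and then passing to the limit using the compactness of $M_{a,\phi}$ established in Proposition \ref{prop:HilbertSchmidt}.)

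The only genuine subtlety is ensuring that every sum above converges absolutely so that the rearrangements and Cauchy-Schwarz steps are legitimate in infinite dimension. This however is free of charge here: because $\phi$ is polynomial, $\f{M}_{a,\phi}$ is a band matrix (Remark \ref{rmq:HermiteConvol}), so each row sum $\Sigma_{a,\phi}(l)$ is a finite quantity, and the Hilbert-Schmidt property of $M_{a,\phi}$ guarantees absolute convergence of the double sum $\sum_{l,j}|\f{M}_{a,\phi}(l,j)|\,g_l^2$ whenever $g\in L^2(\mu)$. No further difficulty is expected; the real work is deferred to the subsequent estimation of $\sup_{l\geq K}\Sigma_{a,\phi}(l)$ under Hypothesis \textbf{(H)}.
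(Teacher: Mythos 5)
Your proof is correct, but it reaches the row-sum bound by a genuinely different route than the paper. The paper works at the level of operators: it restricts $Q_{a,\phi}^*$ to the stable subspace $V_K$, identifies $\left\|{Q_{a,\phi}^*}_{|_{V_K}}\right\|_{\text{op}}^2$ with the spectral radius of the compact self-adjoint operator ${}^T\!N_K N_K=\left(\f{M}_{a,\phi}(i,j)\right)_{i,j\geq K}$, and then invokes Gershgorin's disk theorem, which it must extend from finite matrices to compact operators on $l^2_K$ (using Proposition \ref{prop:HilbertSchmidt} so that the spectrum reduces to eigenvalues together with $0$). You instead bound the quadratic form $\langle g, M_{a,\phi}g\rangle_{L^2(\mu)}$ directly by a Schur-type row-sum test: Cauchy--Schwarz after splitting $|\f{M}_{a,\phi}(l,j)|$ into two square-root factors, combined with the symmetry of $\f{M}_{a,\phi}$, gives $\sum_{l,j\geq K}|g_l|\,|\f{M}_{a,\phi}(l,j)|\,|g_j|\leq \sup_{l\geq K}\Sigma_{a,\phi}(l)\,\|g\|^2_{L^2(\mu)}$. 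This is more elementary: no spectral radius, no compactness, and no infinite-dimensional version of Gershgorin is needed (your parenthetical truncation argument is a third valid route, but it is superfluous once the Schur test is in place). The paper's formulation buys only the packaging as a statement about the operator norm of ${Q_{a,\phi}^*}_{|_{V_K}}$; the numerical bound is identical, since for a symmetric matrix the Gershgorin estimate of the spectral radius and the Schur row-sum estimate coincide. One small quibble: the Hilbert--Schmidt property by itself does not guarantee that the rows of $\f{M}_{a,\phi}$ are absolutely summable; it is the band structure from Remark \ref{rmq:HermiteConvol} (finitely many nonzero entries per row when $\phi$ is polynomial) that makes each $\Sigma_{a,\phi}(l)$ finite and legitimizes your rearrangements --- but since you also invoke that, nothing essential is missing.
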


\begin{proof}
Let $r\in {\Z^+}$, $a\in (0,1)$ and $\phi$ as in the statement of the proposition, and let $K=r+1$. First, notice that $V_K$, the set of functions $g\in L^2(\mu)$ with Hermite decomposition
\begin{align*}
g=\sum_{k=K}^\infty g_k \H_k,
\end{align*}
is stable under action of $Q_{a,\phi}^*$ by Remark \ref{rmq:HermiteConvol}. The space $V_K$ equipped with the $L^2(\mu)$ structure is again a Hilbert space. Let us call ${Q_{a,\phi}^*}_{|_{V_K}}$ the restriction of $Q_{a,\phi}^*$ to $V_K$. It is again bounded, with operator norm
\begin{align*}
\left\|{Q_{a,\phi}^*}_{|_{V_K}}\right\|_{ \text{op}}:=\sup_{g \in V_K\setminus\left\{0\right\}}\frac{\|Q_{a,\phi}^*[g]\|_{L^2(\mu)}}{\|g\|_{L^2(\mu)}}.
\end{align*} 
Hence, the desired majoration \eqref{eq:AppliGershgorin} is equivalent to the following bound on the operator norm:
\begin{align}
\left\|{Q_{a,\phi}^*}_{|_{V_K}}\right\|_{ \text{op}}^2 & \,\leq \, \sup_{l \geq K}\,\Sigma_{a,\phi}(l).
\label{eq:EtapeInter}
\end{align}
The isometry between $L^2(\mu)$ and $l^2$ restricts to an isometry between $V_K$ and $l^2_K$, defined as the space of real sequences $(u_n)_{n\geq K}$ with $\sum_{n\geq K}u_n^2<+\infty$. By this isometry, if $N_K=(N_K(i,j))_{i,j \geq K}$ stands for the infinite matrix associated to ${Q_{a,\phi}^*}_{|_{V_K}}$, then:
\begin{align*}
\left\|{Q_{a,\phi}^*}_{|_{V_K}}\right\|_{ \text{op}}=\left\|N_K \right\|_{ \text{op}}.
\end{align*}
Furthermore, by the properties of block matrix multiplication, one sees that the matrix ${}^T\!N_K N_K$ is nothing else but the matrix $\f{M}_{a,\phi}$ defined in \eqref{eq:CoeffM} (Section \ref{sec:ConvolMarkov}) restricted to $l^2_K$, that is:
\begin{align*}
{}^T\!N_K N_K=\left(\f{M}_{a,\phi}(i,j)\right)_{i,j \geq K}.
\end{align*}

\smallskip

For a complex Banach space $E$, set $\mathcal{G}(E)$ the set of inversible operators on $E$. The spectral radius of a bounded operator $M$ in $E$ is then defined as 
\begin{align*}
\rho(M):=\max\left\{ |\lambda|,\, \lambda \text{Id}-M \in \mathcal{G}(E)\right\}.
\end{align*}
Moreover, if $E$ is Hilbert and if $T$ is a bounded operator of $E$ with adjoint $T^*$, then
\begin{align*}
\|T\|_{ \text{op}}=\sqrt{\rho(T^*T)}.
\end{align*}
The operator $N_K$ being a bounded operator of $l_K^2$ (by restriction of a bounded operator), the preceding equation applies:
\begin{align*}
\left\|N_K \right\|_{ \text{op}}=\sqrt{\rho({}^T\!N_K N_K)}.
\end{align*}

\smallskip

Let us recall a theorem of Gershgorin (\cite{Gershgorin}) related to finite complex auto-adjoint matrices $A$, where $A=(A_{i,j})_{1\leq i,j \leq n}$ for a positive integer $n$. Denoting $\mathcal{G}_n(\mathbb{C})$ the set of invertible matrices of size $n$ and $B(x,r)$ the complex ball of center $x\in \mathbb{C}$ and $r>0$, one has:
\begin{align*}
\left\{\l\in \mathbb{C},\, \l\text{Id}-A \in \mathcal{G}_n(\mathbb{C})\right\} \subset \bigcup_{l=1}^n{B\left(A(l,l),\left|\sum_{1\leq j \leq n,\,j\neq l}A(l,j)\right|\right)}.
\end{align*}
As a consequence,
\begin{align*}
\rho(A) &\leq \sup_{1\leq l\leq n}\left|A(l,l)\right| + \left|\sum_{1\leq j \leq n,\,j\neq l}A(l,j)\right| \leq \sup_{1\leq l\leq n}\sum_{j=1}^n|A(l,j)|.
\end{align*}
Gershgorin's theorem is stated for finite matrices, but the proof extends without difficulty to eigenvalues of operators on $l_K^2$. The operator ${}^T\!N_K N_K$ being autoadjoint and compact by Proposition \ref{prop:HilbertSchmidt}, its spectrum is included in the set of eigenvalues united with the singleton $\left\{0\right\}$, hence the formula above applies and yields majoration \eqref{eq:EtapeInter}, which proves the lemma.

\end{proof}

In order to derive an upper-bound of $\sup_{l \geq K}\,\Sigma_{a,\phi}(l)$ from the explicit expression of $\f{M}_{a,\phi}$ stated in \eqref{eq:CoeffM}, we need two technical lemmas.

\smallskip

\begin{lemma}[First technical lemma]
\label{lemma:DoubleProd}
Let $N \geq K$ be positive integers, and call as in the preceding sections
\begin{align*}
a_\phi:=\left(1+\frac{N}{K}\right)^{-\frac{1}{4}},\quad \quad C_k:=\left(1+\frac{N}{K}\right)^{k/2},\quad \quad k \in {\Z^+}.
\end{align*}
Let $i,k$ be natural integers such that
\begin{align*}
0\leq k \leq N-1,\quad \quad K\leq i+k \leq N,\quad \quad 1\leq i \leq N.
\end{align*}
Then, for all $a \in (a_\phi,1)$ and for all $l \geq K$,
\begin{gather*}
a^{-i}\binom{k+l}{k}^{1/2}\binom{k+l}{k+i}^{1/2}\mathbf{1}_{l \geq i+K}+a^i\binom{k+l+i}{k}^{1/2}\binom{k+l+i}{k+i}^{1/2}\\
\leq 2 C_k C_{k+i} \binom{k+l}{k}^{1/2}\binom{k+l+i}{k+i}^{1/2}.
\end{gather*}
\end{lemma}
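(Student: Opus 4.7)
The plan is to split the left-hand side into its two summands and bound each separately by $C_k C_{k+i}\,\binom{k+l}{k}^{1/2}\binom{k+l+i}{k+i}^{1/2}$; adding these two estimates then yields the factor of $2$ on the right. This term-by-term strategy avoids any delicate interplay between the two summands and works uniformly in $l \geq K$.

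For the first summand $a^{-i}\binom{k+l}{k}^{1/2}\binom{k+l}{k+i}^{1/2}\mathbf{1}_{l\geq i+K}$, I would drop the indicator (which can only decrease the expression), cancel the common $\binom{k+l}{k}^{1/2}$, and invoke the elementary monotonicity $\binom{k+l}{k+i} \leq \binom{k+l+i}{k+i}$. This reduces the desired bound to the scalar inequality $a^{-i} \leq C_k C_{k+i}$: since $a > a_\phi$, we get $a^{-i} \leq a_\phi^{-i} = (1+N/K)^{i/4}$, and the constraint $k\geq 0,\; i\geq 1$ implies $i/4 \leq (2k+i)/2$, so $(1+N/K)^{i/4} \leq (1+N/K)^{(2k+i)/2} = C_k C_{k+i}$. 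For the second summand $a^i\binom{k+l+i}{k}^{1/2}\binom{k+l+i}{k+i}^{1/2}$, I would bound $a^i$ by $1$, cancel $\binom{k+l+i}{k+i}^{1/2}$, and express the remaining binomial ratio telescopically as
$$\frac{\binom{k+l+i}{k}}{\binom{k+l}{k}} \;=\; \prod_{j=1}^{i}\Bigl(1+\frac{k}{l+j}\Bigr).$$
The assumption $l\geq K$ bounds each factor by $1 + k/(K+1)$, and the hypothesis $k\leq N-1$ yields $k/(K+1) \leq N/K$ (since $kK \leq (N-1)K < N(K+1)$), so the product is at most $(1+N/K)^i \leq (1+N/K)^{2k+i} = C_k^2 C_{k+i}^2$, which is exactly what is needed.

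The main obstacle is essentially bookkeeping: one must pick the ``correct'' direction of binomial monotonicity—always inflating indices from $k+l$ to $k+l+i$ so as to match the binomial factor $\binom{k+l+i}{k+i}^{1/2}$ appearing on the right—and then match the exponent of $1+N/K$ supplied by $C_k C_{k+i}$ to what the binomial ratios produce. Once these choices are fixed, each reduction is elementary, with the condition $a>a_\phi$ being used only to control the first summand and the assumption $l\geq K$ entering only in the telescoping estimate for the second; the structural constraints $K\leq i+k\leq N$ and $i\geq 1$ serve merely to calibrate $a_\phi$ and the constants $C_k$.
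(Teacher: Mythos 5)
Your proof is correct, but it takes a genuinely different route from the paper's. You bound the two summands separately, each by $C_kC_{k+i}\binom{k+l}{k}^{1/2}\binom{k+l+i}{k+i}^{1/2}$: for the first you use the crude monotonicity $\binom{k+l}{k+i}\le\binom{k+l+i}{k+i}$ and absorb $a^{-i}<a_\phi^{-i}=(1+N/K)^{i/4}$ into $C_kC_{k+i}$; for the second you telescope the ratio $\binom{k+l+i}{k}/\binom{k+l}{k}=\prod_{j=1}^i(1+k/(l+j))$ and use $l\ge K$, $k\le N-1$. The paper instead keeps the two terms together: it first proves the sharper ratio bounds $\binom{k+i+l}{k}\binom{k+l}{k}^{-1}\le((l+N)/(l+1))^i$ (via a case split $i\ge k$ versus $i<k$) and $\binom{k+l}{k+i}\binom{k+i+l}{k+i}^{-1}\le(l/(l+1))^i$, factors out $\binom{k+l}{k}^{1/2}\binom{k+l+i}{k+i}^{1/2}$, and reduces everything to the one-variable function $f(a)=a^{-i}+(1+N/K)^{i/2}a^i$, whose derivative is nonnegative exactly for $a\ge a_\phi$, so that $f(a)\le f(1)=1+(1+N/K)^{i/2}\le 2(1+N/K)^{i/2}\le 2C_kC_{k+i}$. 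Thus the hypothesis $a>a_\phi$ enters in the paper through the monotonicity of $f$, whereas in your argument it enters only through the elementary bound $a^{-i}<(1+N/K)^{i/4}$. Your version is shorter and avoids both the case analysis and the calculus step; the paper's joint treatment yields a marginally tighter intermediate constant ($1+(1+N/K)^{i/2}$ versus your $(1+N/K)^{i/4}+(1+N/K)^{i/2}$), but since both are then crudely bounded by $2(1+N/K)^{i/2}$, the final statement is identical. One small slip of phrasing: dropping the indicator $\mathbf{1}_{l\ge i+K}$ \emph{enlarges} the (nonnegative) first summand rather than decreasing it, which is of course exactly why it is legitimate when proving an upper bound; and when $l<i$ the binomial $\binom{k+l}{k+i}$ vanishes by convention, so the monotonicity step is harmless there.
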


\begin{proof}
Let $N \geq K$ be positive integers and $i,k,l$ be natural integers such that
\begin{align*}
0\leq k \leq N-1,\quad \quad K\leq i+k \leq N,\quad \quad 1\leq i \leq N, \quad \quad l \geq K.
\end{align*}
For two positive integers $m \geq n$, the notation $[m]_n$ stands for $[m]_n:=m \cdots (m-n+1)$. One has:
\begin{align*}
\binom{k+i+l}{k} \binom{k+l}{k}^{-1}=\frac{[k+i+l]_k}{[k+l]_k}.
\end{align*}
If $i\geq k$, then
\begin{align*}
\binom{k+i+l}{k} \binom{k+l}{k}^{-1}\leq \left(\frac{l+N}{l+1}\right)^k\leq  \left(\frac{l+N}{l+1}\right)^i.
\end{align*}
If $i<k$, then
\begin{align*}
\binom{k+i+l}{k} \binom{k+l}{k}^{-1}=\frac{[k+i+l]_i\,[k+l]_{k-i}}{[k+l]_{k-i}\,[l+i]_i}=\frac{[k+i+l]_i}{[i+l]_i}\leq \left(\frac{l+N}{l+1}\right)^i.
\end{align*}
In both cases,
\begin{align*}
\binom{k+i+l}{k} \binom{k+l}{k}^{-1}\leq \left(\frac{l+N}{l+1}\right)^i.
\end{align*}
Furthermore, in the case where $l \geq i$,
\begin{align*}
\binom{k+l}{k+i}\binom{k+i+l}{k+i}^{-1}=\frac{[k+l]_k\,[l]_i}{[k+i+l]_i\,[k+l]_k}=\frac{[l]_i}{[k+i+l]_i}\leq \left(\frac{l}{l+1}\right)^i.
\end{align*}
Hence, for all $a\in (0,1)$,
\begin{align}
&a^{-i}\binom{k+l}{k}^{1/2}\binom{k+l}{k+i}^{1/2}\mathbf{1}_{l \geq i+K}+a^i\binom{k+l+i}{k}^{1/2}\binom{k+l+i}{k+i}^{1/2}\nonumber \\
&\leq \left(\frac{l}{l+1}\right)^{i/2}  \binom{k+l}{k}^{1/2}\binom{k+l+i}{k+i}^{1/2} \left(a^{-i}\mathbf{1}_{l \geq i+K} + a^i\left(1+\frac{N}{l}\right)^{i/2} \right)\nonumber \\
&\leq  \binom{k+l}{k}^{1/2}\binom{k+l+i}{k+i}^{1/2} f(a),
\label{eq:EtapeInter2}
\end{align}
where we defined 
\begin{align*}
f(a):=a^{-i}+\left(1+\frac{N}{K}\right)^{i/2}a^i,\quad \quad a\in (0,1).
\end{align*}
As one checks easily, the inequality \eqref{eq:EtapeInter2} still holds true if $l <i$, and $f'(a) \geq 0$ if and only if $a \geq a_\phi=(1+N/K)^{-1/4}$. This yields for all $a \in (a_\phi,1)$,
\begin{align*}
f(a) &\leq f(1)= 1+\left(1+\frac{N}{K}\right)^{i/2} \leq  2\left(1+\frac{N}{K}\right)^{i/2} \leq  2 C_kC_{k+i},
\end{align*}
where $C_k$ has been defined as $C_k=\left(1+N/K\right)^{k/2}$ for all $k\in {\Z^+}$.
\end{proof}

\smallskip

\begin{lemma}[Second technical lemma]
\label{lemma:Polynom}
Let $m>q$ be positive integers, and consider the polynomial $P=-\alpha X^m+\beta X^q-1$ with $\alpha,\beta >0$. Then $P\leq 0$ on $\mathbb{R}^+$ if and only if
$$\left(\frac{\beta}{m}\right)^m\left(\frac{q}{\alpha}\right)^q \leq \frac{1}{(m-q)^{m-q}}.$$
\end{lemma}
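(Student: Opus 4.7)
The plan is to reduce the inequality $P(X)\leq 0$ on $\mathbb{R}^+$ to the statement that a single scalar function attains its maximum below $1$, and then solve the resulting calculus problem explicitly.

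First I would rewrite the condition $P\leq 0$ on $\mathbb{R}^+$ as
\begin{align*}
g(x):=\beta x^q-\alpha x^m\,\leq\, 1\quad\text{for all } x\geq 0.
\end{align*}
Since $g(0)=0$, $g$ is continuous on $[0,+\infty)$, and $g(x)\to-\infty$ as $x\to+\infty$ (as $m>q$ and $\alpha>0$), the function $g$ attains its supremum over $\mathbb{R}^+$ at an interior critical point. So the condition $P\leq 0$ on $\mathbb{R}^+$ is equivalent to $\max_{x\geq 0}g(x)\leq 1$.

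Next I would compute
\begin{align*}
g'(x)\,=\,x^{q-1}\bigl(\beta q-\alpha m\, x^{m-q}\bigr),
\end{align*}
which, on $(0,+\infty)$, vanishes only at
\begin{align*}
x^{\ast}\,=\,\left(\frac{\beta q}{\alpha m}\right)^{\!\!1/(m-q)}.
\end{align*}
This is a maximum (since $g'>0$ on $(0,x^{\ast})$ and $g'<0$ on $(x^{\ast},+\infty)$). Using $\alpha (x^{\ast})^{m-q}=\beta q/m$, one finds
\begin{align*}
g(x^{\ast})\,=\,(x^{\ast})^{q}\bigl(\beta-\alpha (x^{\ast})^{m-q}\bigr)\,=\,\frac{\beta(m-q)}{m}\,(x^{\ast})^{q}\,=\,\frac{\beta(m-q)}{m}\left(\frac{\beta q}{\alpha m}\right)^{\!\!q/(m-q)}.
\end{align*}

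Finally, the inequality $g(x^{\ast})\leq 1$ is algebraically manipulated by raising both sides to the power $m-q$ (which preserves the inequality because both sides are positive):
\begin{align*}
\left(\frac{\beta(m-q)}{m}\right)^{\!\!m-q}\left(\frac{\beta q}{\alpha m}\right)^{\!\!q}\,\leq\, 1,
\end{align*}
which, upon gathering the factors of $\beta/m$ and $q/\alpha$, is exactly
\begin{align*}
\left(\frac{\beta}{m}\right)^{\!\!m}\left(\frac{q}{\alpha}\right)^{\!\!q}\,\leq\,\frac{1}{(m-q)^{m-q}}.
\end{align*}
All the steps above are equivalences, hence the lemma follows. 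The only potentially delicate point is the algebraic repackaging in this last step, but it is routine; there is no real obstacle in the argument.
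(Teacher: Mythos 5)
Your proof is correct and follows essentially the same route as the paper's: both locate the unique interior maximum of $x\mapsto \beta x^q-\alpha x^m$ at $x^{\ast}=\left(\beta q/(\alpha m)\right)^{1/(m-q)}$, evaluate there, and rearrange the resulting inequality (you raise to the power $m-q$ where the paper writes the fractional exponents directly, which is the same algebra). No gaps.
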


\begin{proof}[Proof of Lemma \ref{lemma:Polynom}]
Let $P(x)=-\alpha x^m+ \beta  x^q-1$ be as in the wording of the lemma. Then, for all $x\in \R$, 
\begin{eqnarray*}
P'(x)&=&-m\alpha x^{m-1}+q\beta x^{q-1}=x^{q-1}(-m\alpha x^{m-q}+q\beta),
\end{eqnarray*}
thus $P$ attains its maximum on $[0,+\infty)$ at the point $x_0=\left((\beta q)/(\alpha m)\right)^{1/(m-q)}$. Moreover,
\begin{eqnarray*}
P(x_0)&=&x_0^q(-\alpha x_0^{m-q}+\beta)-1= \left(\frac{\beta q}{\alpha m}\right)^{\frac{q}{m-q}}  \left(-\frac{\beta q}{m}+\beta\right)-1\\
&=&\frac{\beta}{m}\left(\frac{\beta q}{\alpha m}\right)^{\frac{q}{m-q}} (m-q)-1=\left(\frac{\beta}{m}\right)^{\frac{m}{m-q}}\left(\frac{q}{\alpha}\right)^{\frac{q}{m-q}}(m-q)-1,
\end{eqnarray*}
so that $P(x_0) \leq 0$ if and only if 
$$\left(\frac{\beta}{m}\right)^m\left(\frac{q}{\alpha}\right)^q \leq \frac{1}{(m-q)^{m-q}}.$$
\end{proof}

\smallskip

We are now ready to show Proposition \ref{prop:PoincareReverse}.
\begin{proof}[Proof of Proposition \ref{prop:PoincareReverse}]
Set $K=r+1$, let $a \in (0,1)$ and $l$ a positive integer such that $l \geq K$. Then,
\begin{align*}
\Sigma_{a,\phi}(l)&=\sum_{j=K}^{+\infty} \left|\f{M}_{a,\phi}(l,j)\right|\\
& =\left|\f{M}_{a,\phi}(l,l)\right|+ \sum_{i=1}^N \left(\left|\f{M}_{a,\phi}(l,l-i)\right|\1_{l-i\geq K}+ \left|\f{M}_{a,\phi}(l,l+i)\right|\right)\\
&=a^{2l}\sum_{k\geq 0}{\binom{k+l}{k}(1-a^2)^{k}\phi_k^2}\\
&\quad+ \sum_{i=1}^N a^{2l-i}\sum_{k\geq 0}{\binom{k+l}{k}^{1/2}\binom{k+l}{k+i}^{1/2}(1-a^2)^{\frac{2k+i}{2}}\phi_{k+i}\phi_k}\1_{l-i\geq K}\\
&\quad +\sum_{i=1}^N a^{2l+i}\sum_{k\geq 0}{\binom{k+l+i}{k}^{1/2}\binom{k+l+i}{k+i}^{1/2}(1-a^2)^{\frac{2k+i}{2}}\phi_{k+i}\phi_k}\\
&=a^{2l}\left(1+\sum_{k=K}^N{\binom{k+l}{k}(1-a^2)^k}\phi_k^2 + \underset{0\leq k< k+i \leq N}{\sum}\left\{ (1-a^2)^{\frac{2k+i}{2}}|\phi_k| |\phi_{k+i}|\mathcal{C}_{a,\phi}(l,i,k)\right\}\right),
\end{align*}
where
\begin{align*}
\mathcal{C}_{a,\phi}(l,i,k)&:=a^{-i}\binom{k+l}{k}^{1/2}\binom{k+l}{k+i}^{1/2}\mathbf{1}_{l \geq i+K}+a^{i}\binom{k+l+i}{k}^{1/2}\binom{k+l+i}{k+i}^{1/2}
\end{align*}
is precisely the quantity addressed in Lemma \ref{lemma:DoubleProd}. If $\phi_k \phi_{k+i} \neq 0$ and $i\geq 1$ then $i+k\geq K$, hence the assumptions of Lemma \ref{lemma:DoubleProd} hold and we get for all $a\in (a_\phi,1)$: 
\begin{align*}
\Sigma_{a,\phi}(l)&\leq a^{2l}\left(1+\sum_{k=K}^N{\binom{k+l}{k}(1-a^2)^k}\phi_k^2 \right)\\
&+2 \, a^{2l}\left( \underset{0\leq k< k+i \leq N}{\sum \! \sum}\left\{ (1-a^2)^{\frac{2k+i}{2}}|\phi_k| |\phi_{k+i}| C_k C_{k+i} \binom{k+l}{k}^{1/2}\binom{k+l+i}{k+i}^{1/2}\right\}\right).
\end{align*}
Noticing that $C_k \geq 1$ for every positive integer $K$ and that $C_0=1$ allows to recognize the development of a square:
\begin{align*}
\Sigma_{a,\phi}(l)&\leq a^{2l}\left(1+\sum_{k=K}^N{\binom{k+l}{k}^{1/2}(1-a^2)^\frac{k}{2}C_k |\phi_k|} \right)^2\\ &\leq a^{2l}\left(1+\sum_{k=K}^N{\frac{C_k |\phi_k|}{\sqrt{k!}}\left((N+l)(1-a^2)\right)^\frac{k}{2}} \right)^2,
\end{align*}
where we used that for all natural integer $k \leq N$,
\begin{align*}
\binom{k+l}{k} &\leq \frac{(N+l)^k}{k!}.
\end{align*}
We recognize the coefficient $\gamma_k$ introduced in Section \ref{sec:HermiteFourier} to state Hypothesis \textbf{(H)}:
\begin{align*}
\gamma_k&= \frac{C_k |\phi_k|}{\sqrt{k!}},\quad \quad k \geq K,
\end{align*}
so that
\begin{align*}
\Sigma_{a,\phi}(l)&\leq a^{-2N}a^{2(N+l)}\left(1+\sum_{k=K}^N{\gamma_k\left((N+l)(1-a^2)\right)^\frac{k}{2}} \right)^2.
\end{align*}
For all $a\in (0,1)$ and $l \in {\Z^+}$, we perform the change of variables
\begin{align*}
u_{a,\phi}(l):=-(l+N) \log a >0, 
\end{align*}
so that
\begin{align*}
(N+l)(1-a^2)=(N+l)\left(1- \exp \left( -2\frac{u_{a,\phi}(l)}{l+N} \right) \right) \leq 2 u_{a,\phi}(l),
\end{align*}
and introduce the function
\begin{align*}
h(u)=\exp(-u)\left(1+\sum_{k=K}^N{\gamma_k (2u)^{k/2}}\right),\quad \quad u \geq 0.
\end{align*}
Then, 
\begin{align*}
\Sigma_{a,\phi}(l)&\leq a^{-2N} h^2\left( u_{a,\phi}(l)\right).
\end{align*}
The last part of the proof is devoted to showing that the function $h$ is non-increasing on $[0,+\infty)$; indeed in that case, we have for all $a \in (a_\phi,1)$ and $l \geq K$:
\begin{align*}
\Sigma_{a,\phi}(l)& \leq  a^{-2N} h^2\left( u_{a,\phi}(K)\right) =a^{2K} \left(1+\sum_{k=K}^N{\gamma_k \left(-2(K+N)\log a\right)^{k/2}}\right)^2,
\end{align*}
which, jointly with Lemma \ref{lemma:Gershgorin}, proves Proposition \ref{prop:PoincareReverse}. So let us study the variation of $h$. For all $u\geq 0$,
\begin{align*}
h'(u)&=\exp(-u)\left(-1-\sum_{k=K}^N{\gamma_k (2u)^{k/2}}+\sum_{k=K}^N{\gamma_k k(2u)^{(k-2)/2}}\right).
\end{align*}
Let us consider separately the powers of $u^\frac{1}{2}$ ranging from $K$ to $N-2$ (when existing) and the remaining powers:
\begin{align*}
-1-\sum_{k=K}^N{\gamma_k (2u)^{k/2}}+\sum_{k=K}^N{\gamma_k k(2u)^{(k-2)/2}} &=\sum_{k=K}^{N-2}{\left(-\gamma_k+(k+2)\gamma_{k+2}\right) (2u)^{k/2}}\\
&+ K\gamma_K (2u)^{(K-2)/2}+\gamma_{K+1}(K+1)(2u)^{(K-1)/2}\\
&-\left(1+\gamma_N(2u)^{N/2}+\gamma_{N-1}(2u)^{(N-1)/2}\right).
\end{align*}
As Hypothesis \textbf{(H1)} holds, the sum $\sum_{k=K}^{N-2}$ is nonpositive. If $K\leq N-1$, the remaining term writes
\begin{align*}
&\frac{1}{2}\left(-1-2\gamma_N(2u)^{N/2}+2\gamma_{K+1}(K+1)(2u)^{(K-1)/2}\right)\\
+&\frac{1}{2}\left(-1-2\gamma_{N-1}(2u)^{(N-1)/2}+2K\gamma_K (2u)^{(K-2)/2}\right),
\end{align*}
which is nonpositive thanks to Hypothesis \textbf{(H2a)} and Lemma \ref{lemma:Polynom}. If $K=N$, the same arguments provide the nonpositivity of the remaining term, which reduces to
$$-1 -\gamma_N(2u)^{N/2}+N\gamma_N(2u)^{(N-2)/2}.$$
Finally, under Hypothesis \textbf{(H)}, we find that $h$ has a nonpositive derivative on $[0,+\infty[$ hence is non-increasing, which completes the proof.
\end{proof}

\subsection{Proof of Theorem \ref{thm:RefinedConvolChi}}
\label{sec:ProofTheoremConvolChi}

Let us turn to the proof Theorem \ref{thm:RefinedConvolChi}.

\begin{proof}[Proof of Theorem \ref{thm:RefinedConvolChi}] 
Let $\phi,\,f \in L^2(\mu)$ be as in the wording of the theorem. For all $a\in [0,1]$,
\begin{align*}
\qi_2\left (af*\sqrt{1-a^2}\phi\right) &= \left\|K_a(f,\phi) -1\right\|_{L^2(\mu)} 
& \leq \left\|K_a(f-1,\phi) \right\|_{L^2(\mu)}+\left\|K_a(\1,\phi) -1\right\|_{L^2(\mu)}.
\end{align*}
According to Proposition \ref{prop:DefaultInv}, for all $a\in [0,1]$,
\begin{align*}
\left\|K_a(\1,\phi) -1\right\|_{L^2(\mu)}&=  \|Q_{a,\phi}^*[\1]-1\|_{L^2(\mu)} \leq (1-a^2)^\frac{r+1}{2} \qi_2(\phi),
\end{align*}
while by Proposition \ref{prop:PoincareReverse}, for all $a\in (a_\phi,1)$,
\begin{align*}
\left\|K_a(f-1,\phi) \right\|_{L^2(\mu)}&=\|Q_{a,\phi}^*[f-1]\|_{L^2(\mu)}\leq a^{r+1}\left(1+d_\phi(a)\right) \qi_2(f),
\end{align*}
which proves the theorem.
\end{proof}

\subsection{Alternative bound}
\label{sec:AlterBound}

For the sake of completeness, let us conclude this section with a bound alternative to inequality \eqref{eq:ConvolChi}.

\begin{proposition}[Alternative bound on $\chi_2$ under convolution]
Let $f,\phi \in L^2(\mu)$ be density with moments matching the Gaussian moments up to order $r\in {\Z^+}$, and moreover assume that $f$ is $(r+1)$-times derivable, with $D^{r+1}f \in L^2(\mu)$. Then, there exists a universal constant $c_r>0$ such that $\forall a \in (0,1)$,
\begin{align}
\qi_2\left (af*\sqrt{1-a^2}\phi\right) &\leq a^{r+1}\qi_2(f)+(1-a^2)^\frac{r+1}{2}\qi_2(\phi) \nonumber\\
&+c_r (1-a^2)^\frac{r+1}{2}\left( \qi_2(f)\qi_2(\phi) +  	
\left\|D^{r+1}f\right\|_{L^2(\mu)} \left\|D^{-(r+1)}\phi\right\|_{L^2(\mu)}\right),
\label{eq:SymmetricConvolChi}
\end{align}
where $D^{-1}$ stands for the operator which maps a function $\phi \in L^2(\mu)$ onto its primitive with vanishing mean.
\label{prop:AlterBound}
\end{proposition}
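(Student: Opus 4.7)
\textbf{Proof sketch for Proposition \ref{prop:AlterBound}.}
The plan is to mimic the bilinear decomposition of Section \ref{sec:Strategy} but to exploit the linearity in $\phi$ one step further. Setting $b:=\sqrt{1-a^2}$ and using the identities of Remark \ref{rmq:OrnsteinUhlenbeck} together with the bilinearity of $(f,\psi)\mapsto Q_{a,\psi}^*[f]$, I would write
\begin{align*}
K_a(f,\phi)-\1 = P_{-\log a}[f-\1] + P_{-\log b}[\phi-\1] + Q_{a,\phi-\1}^*[f-\1].
\end{align*}
Applying Proposition \ref{prop:DefaultInv} to the two Ornstein-Uhlenbeck terms (each acting on a function whose Hermite-Fourier coefficients start at index $r+1$, because of the matching-moments assumption) immediately yields $\|P_{-\log a}[f-\1]\|_{L^2(\mu)}\leq a^{r+1}\qi_2(f)$ and $\|P_{-\log b}[\phi-\1]\|_{L^2(\mu)}\leq b^{r+1}\qi_2(\phi)$, which account for the two leading contributions in \eqref{eq:SymmetricConvolChi}. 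The whole difficulty is concentrated in bounding the remaining term $\|Q_{a,\phi-\1}^*[f-\1]\|_{L^2(\mu)}$ by $c_r\,b^{r+1}\bigl(\qi_2(f)\qi_2(\phi) + \|D^{r+1}f\|_{L^2(\mu)}\|D^{-(r+1)}\phi\|_{L^2(\mu)}\bigr)$.

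By Remark \ref{rmq:HermiteConvol} applied with $\phi$ replaced by $\phi-\1$, this remaining term admits the Hermite-Fourier expansion
\begin{align*}
Q_{a,\phi-\1}^*[f-\1] = \sum_{n,k\geq r+1}\binom{n+k}{k}^{1/2} a^n b^k f_n \phi_k\,\H_{n+k}.
\end{align*}
Since $n,k\geq r+1$ throughout, one can factor out $a^{r+1}b^{r+1}\leq b^{r+1}$. I would then split the index range into a balanced region (where $n$ and $k$ are of comparable size) and two imbalanced tails. On the balanced region, a weighted Cauchy-Schwarz relying on the Vandermonde identity $\sum_{n+k=m}\binom{m}{k}a^{2n}b^{2k}=1$ absorbs the combinatorial factor and yields the $\qi_2(f)\qi_2(\phi)$ contribution coming from the $L^2$-norms. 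On the imbalanced tails, the pointwise estimates $f_n^2 \leq \|D^{r+1}f\|_{L^2(\mu)}^2/A_n$ and $\phi_k^2 \leq \|D^{-(r+1)}\phi\|_{L^2(\mu)}^2\cdot B_k$, with $A_n:=n(n-1)\cdots(n-r)$ and $B_k:=(k+1)(k+2)\cdots(k+r+1)$, are plugged in; a second weighted Cauchy-Schwarz balancing $\sqrt{B_k/A_n}$ against the binomial (and, once more, using the Vandermonde identity to sum out the combinatorics) produces the $\|D^{r+1}f\|\,\|D^{-(r+1)}\phi\|$ contribution.

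The main obstacle is precisely the binomial factor $\binom{n+k}{k}^{1/2}$: it has no decoupled product form in $n$ and $k$ and grows exponentially on the diagonal. The Vandermonde identity is the only genuine cancellation mechanism available, and it is effective only when applied on whole level sets $\{n+k=m\}$; arranging the weighted Cauchy-Schwarz so that, after the $m$-summation, the remaining partial sums in $n$ and in $k$ factorise and can be identified respectively with $\qi_2(f)$, $\qi_2(\phi)$, $\|D^{r+1}f\|$ and $\|D^{-(r+1)}\phi\|$ --- with a universal, $a$-independent constant $c_r$ --- is the technical core of the proof. This is also the structural reason why the two terms $\qi_2(f)\qi_2(\phi)$ and $\|D^{r+1}f\|\|D^{-(r+1)}\phi\|$ both appear in the bound: each corresponds to one of the two limiting regimes (balanced vs.\@ imbalanced) in the decomposition.
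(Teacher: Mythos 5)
Your overall architecture matches the paper's: the trilinear split $K_a(f,\phi)-\1=K_a(f-\1,\1)+K_a(\1,\phi-\1)+K_a(f-\1,\phi-\1)$, the improved Poincar\'e inequality of Proposition \ref{prop:DefaultInv} for the two Ornstein--Uhlenbeck terms, and the Hermite--Fourier expansion of the cross term are all exactly what the paper does. But the estimate of $\|K_a(f-\1,\phi-\1)\|_{L^2(\mu)}$ --- which you yourself identify as "the technical core" --- is left unresolved, and the mechanism you sketch for it has two concrete defects. First, factoring $a^{r+1}b^{r+1}$ out of the coefficients without simultaneously adjusting the binomial ruins the Vandermonde cancellation: after Cauchy--Schwarz on a level set $\{n+k=l\}$ with weights $\binom{l}{k}a^{2(n-r-1)}b^{2(k-r-1)}$, the weights sum to $(ab)^{-2(r+1)}$ rather than $1$, which is not bounded uniformly in $a\in(0,1)$. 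Second, the pointwise bounds $f_n^2\leq\|D^{r+1}f\|^2_{L^2(\mu)}/A_n$ and $\phi_k^2\leq B_k\|D^{-(r+1)}\phi\|^2_{L^2(\mu)}$ replace each coefficient by the full norm and therefore discard summability; the ensuing double sum over the "imbalanced tails" of the surviving combinatorial factors has no reason to converge to a universal constant, and no balanced/imbalanced dichotomy appears in the actual argument.

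The missing ideas are the following. Writing $K=r+1$ and reindexing $\phi_n\to\phi_{n+K}$ (which extracts the prefactor $(1-a^2)^{K/2}$ from the $\phi$ side only), Pascal's formula gives, for $m+n=l-K$,
\begin{align*}
\binom{l}{m}=\binom{l-K}{m}\,\frac{l\cdots(l-K+1)}{(n+K)\cdots(n+1)},
\end{align*}
so that Cauchy--Schwarz on each level set with the genuinely probabilistic weights $\binom{l-K}{m}a^{2m}(1-a^2)^{n}$ (which sum exactly to $1$) leaves only the ratio $\frac{l\cdots(l-K+1)}{(n+K)\cdots(n+1)}$ against $f_m^2\phi_{n+K}^2$. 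The elementary inequality $(x+y+K)\cdots(x+y+1)\leq c_K\bigl(x\cdots(x-K+1)+(y+K)\cdots(y+1)\bigr)$ applied with $x=m$, $y=n$ then splits this single expression into the two products $\bigl(\sum f_m^2\bigr)\bigl(\sum\phi_n^2\bigr)$ and $\bigl(\sum m\cdots(m-K+1)f_m^2\bigr)\bigl(\sum\frac{\phi_n^2}{n\cdots(n-K+1)}\bigr)$, which are identified via \eqref{eq:DeriveeHermite} with $\qi_2(f)^2\qi_2(\phi)^2$ and $\|D^{r+1}f\|^2_{L^2(\mu)}\|D^{-(r+1)}\phi\|^2_{L^2(\mu)}$. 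In particular both terms of the final bound come out of one and the same Cauchy--Schwarz, not from two separate regimes of the index set.
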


Contrarily to what happens for bound \eqref{eq:ConvolChi}, \eqref{eq:SymmetricConvolChi} stands for all $a\in (0,1)$ and the polynomial assumption on $\phi$ is not required, making the relative roles of $f$ and $\phi$ more symmetric. 
The drawback of bound \eqref{eq:SymmetricConvolChi} is that it involves the norm of the $(r+1)$-th derivative of $f$, which we fail to control in the framework of the Central Limit Theorem.

\begin{proof}[Proof of Proposition \ref{prop:AlterBound}]
Set $r\in {\Z^+}$, $K=r+1$ and $f,\phi$ two densities as in the wording of the remark, so that 
\begin{align*}
f=1+\sum_{k=K}^{+\infty}{f_k \H_k},\quad \quad \phi=1+\sum_{k=K}^{+\infty}{\phi_k \H_k}.
\end{align*}
For all $a\in (0,1)$, one has:
\begin{align*}
\qi_2\left(af*\sqrt{1-a^2}\, \phi \right)&=\left\|K_a(f,\phi)-1\right\|_{L^2(\mu)}\\
& \leq \left\|K_a(f-1,\1)\right\|_{L^2(\mu)} +\left\|K_a(\1,\phi-1)\right\|_{L^2(\mu)}+\left\|K_a(f-1,\phi -1)\right\|_{L^2(\mu)}.
\end{align*}
Now, 
\begin{align*}
K_a(f-1,\1)&=P_{-\log a}[f-1],\quad \quad K_a(\1,\phi-1)=P_{-\frac{1}{2}\log (1-a^2)}[\phi-1],
\end{align*}
hence by the improved Poincar\'e inequality from Proposition \ref{prop:DefaultInv} we get the two first terms of the bound. It remains to consider $\left\|K_a(f-1,\phi -1)\right\|_{L^2(\mu)}$. For all $a\in (0,1)$, one has by Remark \ref{rmq:HermiteConvol}:
\begin{align*}
K_a(f-1,\phi-1)&=\sum_{m=K}^{+\infty}\,\sum_{n=K}^{+\infty}\binom{m+n}{m}^{1/2}a^m(1-a^2)^{n/2}f_m\phi_n\H_{n+m},
\end{align*}
hence
\begin{align*}
\left\|K_a(f-1,\phi -1)\right\|_{L^2(\mu)}^2&=\sum_{l=2K}^{+\infty}\left(\sum_{\substack{m+n=l\\m,n\geq K}}\binom{l}{m}^{1/2}a^m(1-a^2)^{n/2}f_m\phi_n\right)^2\\
&=(1-a^2)^{K/2}\sum_{l=2K}^{+\infty}\left(\sum_{\substack{m+n=l-K\\m\geq K,n \geq 0}}\binom{l}{m}^{1/2}a^m(1-a^2)^{n/2}f_m\phi_{n+K}\right)^2.
\end{align*}
By Cauchy-Schwarz inequality and Pascal formula, this rewrites again
\begin{gather*}
(1-a^2)^{K/2}\sum_{l=2K}^{+\infty}\left(\sum_{\substack{m+n=l-K\\m\geq K,n \geq 0}}\binom{l-K}{m}^{1/2}\left(\frac{l\cdots(l-K+1)}{(n+K)\cdots(n+1)}\right)^{1/2}a^m(1-a^2)^{n/2}f_m\phi_{n+K}\right)^2\\
\leq (1-a^2)^{K/2}\sum_{l=2K}^{+\infty}\sum_{\substack{m+n=l-K\\m\geq K,n \geq 0}}\frac{l\cdots(l-K+1)}{(n+K)\cdots(n+1)}f_m^2\phi_{n+K}^2.
\end{gather*}
Now, there exists $c_K>0$ such that $\forall x \geq K,\forall y \geq 0$,
\begin{align*}
(x+y+K)\cdots (x+y+1) &\leq c_K \left(x \cdots (x-K+1)+(y+K)\cdots (y+1) \right).
\end{align*}
Applying this to $x=m$ and $y=n$, we find that
\begin{align*}
\left\|K_a(f-1,\phi -1)\right\|_{L^2(\mu)}^2&\leq c_K(1-a^2)^{K/2}\sum_{l=2K}^{+\infty}\sum_{\substack{m+n=l-K\\m\geq K,n \geq 0}}\left(1+ \frac{m\cdots (m-K+1)}{(n+K)\cdots(n+1)}\right)f_m^2\phi_{n+K}^2\\
&=c_K(1-a^2)^{K/2}  \left(\sum_{m\geq K}{f_m^2}\right)\left(\sum_{n\geq K}{\phi_n^2}\right)\\
& +c_K (1-a^2)^{K/2}  \left(\sum_{m\geq K}{m\cdots (m-K+1) f_m^2}\right)\left(\sum_{n\geq K}{  \frac{\phi_n^2}{(n+K)\cdots(n+1)}}\right),
\end{align*}
which is the Hermite representation of the expected quantity by formula \eqref{eq:DeriveeHermite}.
\end{proof}

\section[Proof of the convergence theorem]{Proof of Theorem \ref{thm:ConvergChi}}
\label{sec:ProofRateChi}
Finally, we conclude the article with the proof of our main theorem, Theorem \ref{thm:ConvergChi}, which follows on from the recursion formula \eqref{eq:RecursionDensity} and barycentric convolution inequality for $\chi_2$-distance \eqref{eq:ConvolChi}.\smallskip

\begin{proof}[Proof of Theorem \ref{thm:ConvergChi}]
In the framework of the theorem, denote 
\begin{align*}
n_0:=\left\lceil \frac{1}{1-a_\phi^2}  \right\rceil \, \vee \, 2.
\end{align*}
By the two aforementioned relations, we have for all integer $n \geq n_0$:
\begin{align*}
\qi_2(f_n) &\leq \left(1-\frac{1}{n}\right)^\frac{r+1}{2}\left(1+d_\phi\left(\sqrt{1-1/n}  \right)\right) \qi_2(f_{n-1})+\frac{1}{n^\frac{r+1}{2}}\qi_2(\phi).
\end{align*}
Remembering that $r\geq 2$, let us call for all $n\geq n_0$,
\begin{align*}
c_n:=\left(1-\frac{1}{n}\right)^\frac{r+1}{2}\left(1+d_\phi\left(\sqrt{1-1/n}  \right)\right)=1-\frac{r+1}{2n}+\mathcal{O}\left(\frac{1}{n^\frac{3}{2}}\right),\quad \quad d_n:=\frac{1}{n^\frac{r+1}{2}}\qi_2(\phi),
\end{align*}
where we denote $v_n=\mathcal{O}(u_n)$ if $\limsup_{n \rightarrow + \infty} |v_n/u_n|<+\infty$. The preceding recursive inequality yields
\begin{align*}
\qi_2(f_n) &\leq \left(\prod_{k=n_0}^n c_k\right) \qi_2(f_{n_0-1}) + \sum_{k=n_0}^n \left(\prod_{j=k+1}^n c_k\right)d_k.
\end{align*}
Now,
\begin{align*}
\log\left( \prod_{k=n_0}^n c_k \right) &=\sum_{k=n_0}^n \log c_k=- \sum_{k=n_0}^n\left(\frac{r+1}{2n}+\mathcal{O}\left(\frac{1}{n^\frac{3}{2}}\right)\right)=-\frac{r+1}{2} \log n + \mathcal{O}\left(1\right).
\end{align*}
This leads to
\begin{align*}
\prod_{k=n_0}^n c_k = \mathcal{O}\left( \frac{1}{n^\frac{r+1}{2}} \right);\quad \quad \sum_{k=n_0}^n \left(\prod_{j=k+1}^n c_k\right)d_k &= \left(\prod_{j=n_0}^n c_j\right) \sum_{k=n_0}^n{\frac{d_k}{\prod_{j=n_0}^k c_j}}\\
&= \mathcal{O}\left( \frac{1}{n^\frac{r+1}{2}} \right) \sum_{k=n_0}^n \mathcal{O}\left(1\right)\\
&=\mathcal{O}\left( \frac{1}{n^\frac{r-1}{2}} \right).
\end{align*}
Finally, 
\begin{align*}
\qi_2(f_n):=\mathcal{O}\left( \frac{1}{n^\frac{r+1}{2}} \right)+\mathcal{O}\left( \frac{1}{n^\frac{r-1}{2}} \right)=\mathcal{O}\left( \frac{1}{n^\frac{r-1}{2}} \right),
\end{align*}
which proves the theorem.
\end{proof}

\begin{remark}[Non i.i.d\@ case]
\label{rmk:NonIIDCase}
If we suppose that the random variables $(X_i)_{i\geq 1}$ are independent and can be distributed according to densities $(\phi_j)_{j \in J}$, where $J$ is a finite set and each $\phi_j$ is polynomial and complies with \textbf{(H)}, then the integer $n_0$ above is replaced by
\begin{align*}
n_0:=\max_{j \in J}\left\lceil \frac{1}{1-a_{\phi_j}^2}  \right\rceil \, \vee \, 2,
\end{align*}
and the quantities $(c_n,d_n)_{n \geq n_0}$ by
\begin{align*}
c_n & :=\left(1-\frac{1}{n}\right)^\frac{r+1}{2}\max_{j \in J} \left(1+d_{\phi_j}\left(\sqrt{1-1/n}  \right)\right)=1-\frac{r+1}{2n}+\mathcal{O}\left(\frac{1}{n^\frac{3}{2}}\right),\\
 d_n & :=\frac{1}{n^\frac{r+1}{2}}\max_{j \in J}\qi_2(\phi_j)= \mathcal{O}\left(\frac{1}{n^\frac{r+1}{2}}\right).
\end{align*}
The rest of the proof is unchanged.
\end{remark}

\bibliographystyle{plain}
\bibliography{BiblioTCL}
\end{document}